\documentclass[11pt]{article}

\usepackage[a4paper,top=2.54cm,bottom=2.54cm,left=3.17cm,right=3.17cm,%
            includehead,includefoot]{geometry}

\usepackage{amsmath,amssymb,amsfonts,amsthm}
\usepackage{graphicx}
\usepackage{float}
\usepackage{subcaption}
\usepackage{xcolor}
\usepackage{booktabs}
\usepackage{array}
\usepackage{hyperref}
  \hypersetup{colorlinks,citecolor=blue,linkcolor=blue,breaklinks=true}
\usepackage{pgfplots}
\pgfplotsset{compat=1.18}
\usepackage{epsfig}
\usepackage{epstopdf}
\usepackage{makecell}
\usepackage{enumitem}
\usepackage[authoryear,round]{natbib}
\usepackage{yhmath} 
\usepackage{tikz}
\usepackage{fontawesome}
\usetikzlibrary{shapes,positioning,shadows,calc,fit,arrows.meta,decorations.pathreplacing}
\numberwithin{equation}{section}

\allowdisplaybreaks

\newtheorem{theorem}{Theorem}[section]
\newtheorem{lemma}{Lemma}[section]

\newtheorem{definition}{Definition}[section]

\newtheorem{remark}{Remark}
\newtheorem{assumption}{Assumption}[section]
\newcommand{\R}{\mathbb{R}}

\begin{document}

\title{Dynamic Pricing for a Two-Sided Data Market Platform}

\author{Lijun Bo\thanks{Email: lijunbo@xidian.edu.cn, School of Mathematics and Statistics, Xidian University, Xi'an, 710126, China}
\and
Dongfang Yang\thanks{Email: yangdf@stu.xidian.edu.cn, School of Mathematics and Statistics, Xidian University, Xi'an, 710126, China}
\and 
Yijie Huang\thanks{Email: yijie.huang@polyu.edu.hk, Department of Applied Mathematics, The Hong Kong Polytechnic University, Kowloon, Hong Kong, China}
}

\date{}

\maketitle

\begin{abstract}  
We study a continuous-time dynamic pricing problem for a data platform that purchases raw data from privacy-sensitive providers and sells data products to consumers. The platform controls both the acquisition price offered to providers and the selling price charged to consumers. Provider and consumer arrivals are modeled by point processes whose intensities depend on the platform’s current data stock, capturing feedback
between data accumulation and market participation. We formulate the platform’s problem as an infinite-horizon stochastic control problem with a jump-diffusion state process and derive the associated nonlinear integro-differential HJB equation. We prove that the value function is the unique viscosity solution, establish classical regularity under suitable conditions, and verify the optimal feedback pricing policy. Finally, we conduct numerical analyses to examine the influences of model parameters on the optimal pricing policies.\\

\noindent{\it Keywords:} Dynamic pricing, data platform, privacy-sensitivity, jump-diffusion, HJB equation, viscosity solution.

\noindent{\it MSC 2020:} 93E20; 60H30; 60K30

\end{abstract}

\section{Introduction}

The era of big data has witnessed a pronounced rise in the value of data. Data and data products now play essential roles across diverse domains from scientific research and commercial operations to policy formulation. As a result, data markets have emerged, where real-time information purchases can be made to inform decisions.
There are fundamental distinctions between traditional goods and data. First, data are virtual and replicable. Once acquired and processed, the same dataset can be packaged and resold to multiple downstream consumers at negligible marginal cost. Therefore, the commercialization of the data and related  products does not diminish the data volume held by the platform. Second, an individual data record often has limited standalone value; its value becomes significant only when combined with complementary data (cf. \cite{MW99}). Third, data are often time-sensitive. In many applications, especially those involving real-time analytics, prediction, and decision support, the value of data may depreciate as it becomes outdated. This perishability is particularly evident in the current era of data explosion (cf. \cite{JW18}). Demand for real-time data is growing at an accelerating pace. These characteristics make pricing and acquisition decisions in data markets substantially different from those in traditional product markets.

 Dynamic pricing has been widely studied in the context of traditional physical goods sales, as in \cite{GV94}, \cite{ZZ00}, \cite{BH24}. However, dynamic pricing in data markets remains relatively underexplored.
In the work of \cite{NZ20}, a contextual dynamic pricing mechanism is proposed for online data markets, where a query can be sold to different consumers at different times and the broker has the ability to adjust prices dynamically over time. \cite{XJ17} addresses the problem of revenue maximization for a data collector facing sequentially arriving data providers whose privacy valuations are unknown. Focusing on strategic data consumers who may repeatedly submit low bids in an attempt to drive down prices, \cite{C22} explores data pricing techniques designed to counteract such manipulative behaviors. Meanwhile, \cite{ZA21} investigates a pricing problem in the context of fresh data trading, where a destination user requests and pays for fresh data updates from a source provider, and data freshness is measured using the age of information (AoI) metric. 
Within the dynamic pricing setting, \cite{AD19} designed a data marketplace where multiple sellers supply data for sale and multiple buyers come with their own machine learning models dynamically. These studies provide important insights, but they typically focus on either the demand side or specific data-trading mechanisms, and are mostly formulated in discrete time. Less attention has been paid to the joint dynamic pricing problem faced by a data platform that must simultaneously procure raw data from privacy-sensitive providers and sell data products to consumers.

The rapid growth of data-driven economies has given rise to data platforms that intermediate between data providers and data consumers. These platforms facilitate the exchange of raw data from privacy-sensitive individuals or organizations (supply side) and refined data products to end users (demand side). Existing literature primarily examines the role of data intermediaries, whose necessity stems from their capacity to mitigate information asymmetry and incompleteness inherent in product markets. By centralizing transactions, platforms can reduce search frictions, enforce data quality standards, and align divergent privacy preferences. 
Within this market structure with monopolistic data intermediaries, issues like data ownership, 
acquisition policy and pricing strategies are thoroughly examined (\cite{BB22} and \cite{Y22}). As in  \cite{BM16}, the two-sided market trading platform determines both the purchasing price for data providers and the selling price for data consumers. Providers and consumers may accept or reject the bids based on their privacy valuation and willingness-to-pay. It allows data platform to buy raw data from providers, apply data analytics, and sell refined data, i.e. the standard, outlier-removal, and transformed data to consumers.  

A central theme in the literature is that data size is widely regarded as a primary driver of data value and model accuracy. Insufficient data volume inherently compromises the delivery of optimal data analytics service performance.
\cite{LY17} link data value to information entropy, which increases with data size and classification accuracy. \cite{NA16} propose a willingness-to-pay function that grows with data size. Additionally, \cite{D12} assert that model accuracy rises with dataset size, and numerous prior studies have assessed model quality based sorely on the size of the datasets utilized in their creation. \cite{SK22} incorporate the maximum achievable accuracy with commonly used utility function. It results a non-decreasing function with decreasing marginal accuracy and increases asymptotically towards the maximum achievable accuracy as the data size grows.

Motivated by these observations, this paper studies a continuous-time dynamic pricing problem for a monopolistic data platform with dynamically arriving data providers and consumers. The platform controls two prices: the acquisition price paid to data providers and the selling price charged to consumers. Data providers arrive randomly and decide whether to sell their data according to their privacy losses. Consumers also arrive randomly and decide whether to purchase data products according to their willingness to pay and the quality of the platform’s data product. The platform’s data stock evolves over time through depreciation, random fluctuations, and data contributions from providers.

To capture the endogenous interaction between data accumulation and market participation, we model provider and consumer arrivals using point processes with state-dependent intensities. In particular, the arrival rates depend on the platform’s current data stock. This specification reflects the fact that databases of different sizes may have different levels of attractiveness to both sides of the market. Unlike exogenous jump specifications commonly used in the literature, here the jump intensity is allowed to depend on the database size, making the jumps endogenous. This class of state-dependent processes is more realistic and adaptable in practice (see, e.g., \cite{DN16} and \cite{BH25}).  In our model, the platform’s database grows through the successful participation of providers, which not only influences the future arrival rates of both consumers and providers but also affects each consumer’s willingness to pay (via a data‑quality function). Thus, our framework integrates supply‑side dynamics, demand‑side dynamics, and the strategic pricing decisions of the platform in a unified stochastic control setting. 

Under the model setting, our objective is to find an optimal pair of pricing policies (the acquisition price for data providers and the selling price for consumers) that maximizes the platform's expected net profit over an infinite horizon. The state process is a jump‑diffusion with jump terms that depend on both the control and the current state. This problem reduces to solving the associated Hamilton–Jacobi–Bellman (HJB)  equation which is derived via the dynamic programming principle. However, the resulting HJB equation is of the integro-differential type and fully nonlinear due to the presence of a non‑local integral term arising from the jump component of the state process. Consequently, an analytical closed‑form solution is generally unattainable, and more sophisticated mathematical techniques are required. To overcome this difficulty, we follow the roadmap of \cite{DL13} and proceed in several steps to establish the existence of a classical solution to the integro‑differential HJB (ID‑HJB) equation. We first verify that the value function is the unique solution to the ID-HJB equation in the viscosity sense (see, e.g., \cite{BC08} and \cite{BI08}).  Next, we treat the non‑local integral term of the ID‑HJB equation as an inhomogeneous term that depends on the value function. By doing so, the original integro‑differential equation reduces to a non‑homogeneous second‑order ordinary differential equation (ODE). We then investigate the existence and uniqueness of the classical solution to this ODE, leveraging standard results from elliptic ODE theory. This step bridges the gap between viscosity solutions and classical smoothness. With a classical solution in hand, we present a verification theorem that formally confirms that this solution coincides with the true value function. Moreover, the verification theorem allows us to characterize the optimal pricing policies in feedback form: the optimal acquisition price and selling price are expressed as measurable functions of the current data volume. Finally, to complement the theoretical analysis and to gain practical insights into the behavior of the optimal pricing strategies, we provide numerical examples. Specifically, we specify functional forms for the intensity functions, privacy valuation distribution, willingness‑to‑pay distribution, cost function, and quality function, and then solve the resulting ODE using finite difference methods. The numerical results illustrate how model parameters affect the optimal acquisition and selling prices.  In particular, we uncover a data platform life-cycle strategy driven by an inverted-U value function. Our analysis reveals that platforms confront an optimal data stock beyond which further accumulation reduces profitability, triggering a strategic shift from aggressive procurement to cost control. Furthermore, we demonstrate the spillover effect between the consumer and provider sides, and quantify how processing costs asymmetrically impact pricing policies. These insights provide novel managerial guidelines for data platforms seeking to balance growth, data quality, and operational efficiency.

The remainder of this paper is organized as follows.  Section \ref{sec:formulation} formulates the optimal pricing problem in continuous time and derives the associated HJB equation. Section \ref{sec:viscosity} establishes with the well-posedness of the HJB equation in the viscosity solution sense. Section \ref{sec:verification} proves the existence
and uniqueness of classical solutions to the HJB equation, and characterizes the optimal pricing strategies through a verification theorem.  Section \ref{sec:example}, presents numerical examples and examines the effects of key model parameters on the optimal pricing policies.

\begin{figure}[h]\label{market}
\centering
\begin{tikzpicture}[
    node distance=1.5cm and 2cm,
    box/.style={
        rectangle, 
        draw=black, 
        fill=none, 
        text width=2.2cm,
        minimum height=1.5cm, 
        align=center,
        font=\sffamily\bfseries,
        rounded corners=5pt  
    },
    label_style/.style={
        font=\sffamily\Large, 
        text=blue!40!black
    },
    arrow/.style={
        -{Stealth[scale=1.2]}, 
        thick, 
        black, 
        font=\sffamily\bfseries
    },
    timeline_arrow/.style={
        -{Stealth[scale=1.5]}, 
        thick, 
        line width=1.2pt,
        gray!60
    },
     brace/.style={
        decorate,
        decoration={brace, amplitude=8pt, raise=2pt, mirror},
        thick,
        black
    }
]

     \node[box, minimum width=2.2cm, minimum height=5.5cm] (operator) at (0, -0.5) {\large\faServer\ \\Platform};

    \node[box, right=2.5cm of operator] (buyer_n) {\faUser\ \\Consumer 2};
    \node[box, above=0.8cm of buyer_n] (buyer_1) {\faCogs\ \\Consumer 1};
    \node[box, below=0.8cm of buyer_n] (buyer_N) {\faBarChart\ \\Consumer n};
    \node[box, left=2.5cm of operator] (seller_n) {\faMobile\ \\Provider 2};
    \node[box, above=0.8cm of seller_n] (seller_1) {\faBriefcase\ \\Provider 1};
    \node[box, below=0.8cm of seller_n] (seller_N) {\faTablet\ \\Provider n};

    \path (buyer_1) -- (buyer_n) node[midway, font=\Huge, text=blue!70!black] {$\vdots$};
    \path (buyer_n) -- (buyer_N) node[midway,  font=\Huge, text=blue!70!black]{$\vdots$};
    \node[below=0.1cm of buyer_N, font=\Huge, text=blue!70!black](buyer_inf) {$\vdots$};
    \path (seller_1) -- (seller_n) node[midway, font=\Huge, text=blue!70!black] {$\vdots$};
    \path (seller_n) -- (seller_N) node[midway, font=\Huge, text=blue!70!black] {$\vdots$};
    \node[below=0.1cm of seller_N, font=\Huge, text=blue!70!black](seller_inf) {$\vdots$};
    
    \draw[arrow]([yshift=-0.3cm]operator.east) -- ([yshift=-0.3cm]buyer_n.west)
        node[midway, below] {Product};
    
    \draw[arrow] ([yshift=0.3cm]operator.east) -- ([yshift=0.3cm]buyer_n.west) 
        node[midway, above] {Ask $q$};
    \draw[arrow] ([yshift=-0.3cm]seller_n.east) -- ([yshift=-0.3cm]operator.west) 
        node[midway, below] {Raw data};
    
    \draw[arrow] ([yshift=0.3cm]operator.west) -- ([yshift=0.3cm]seller_n.east) 
        node[midway, above] {Bid $p$};

     \draw[timeline_arrow] (-7, 2.5) -- (-7, -4) 
        node[below, font=\sffamily\bfseries, black] { Time  $t$};

     \draw[timeline_arrow] (7, 2.5) -- (7, -4) 
        node[below, font=\sffamily\bfseries, black] { Time  $t$};
        
    \draw[brace] (-6.2, -4.5) -- (-0.2, -4.5);
    \draw[brace] (0.2, -4.5) -- (6.2, -4.5);
    
   \node[font=\sffamily\bfseries, black] at (-3.5, -5.2) {Data acquisition};
   \node[font=\sffamily\bfseries, black] at (3.5, -5.2) {Data monetization};

\end{tikzpicture}
\caption{The structure of a data market featuring a monopolistic data platform.}
\end{figure}

\section{Problem Formulation}\label{sec:formulation}

In this section, we formulate the dynamic optimal pricing problem for a data platform that operates as a monopolistic intermediary. The platform purchases raw data from privacy‑sensitive providers and sells refined data products to end users. We cast the problem as a continuous‑time stochastic control problem, in which the state process follows a jump‑diffusion with jump coefficients that depend on both the current state and the control.

Let $(\Omega, \mathcal{F}, \mathbb{F}, \mathbb{P})$ be a filtered probability space with the filtration $\mathbb{F}=(\mathcal{F}_t)_{t\geq 0}$ satisfying the usual conditions. Assume that this filtered probability space supports three random sources: a scalar Brownian motion $W=(W_t)_{t\geq0}$ and two Poisson point processes $N^{\rm dp}=(N^{\rm dp}_t)_{t\geq0}$ and $N^{\rm c}=(N^{\rm c}_t)_{t\geq0}$ with state-dependent intensity functions. ‌In the real-world data market, a monopolistic data platform operates by dynamically setting two prices, the price offered to data providers per unit of data supplied $p=(p_t)_{t\geq0}$, and the price charged to consumers per product executed $q=(q_t)_{t\geq0}$. Then, we introduce two main blocks (i) data acquisition and (ii) data monetization in our dynamic platform pricing framework:

$\bullet$ \textbf{Data acquisition:}
Let $X=(X_t)_{t\geq 0}$ be the total volume of data held on the platform with initial volume given by  $X_0 = x_0\in\R_+:=[0,\infty)$. The platform achieves data volume growth by purchasing data from providers.  
Data providers arrive according to a point process $N^{\rm dp}=(N_t^{\rm dp})_{t\geq 0}$, which is a doubly stochastic Poisson process with state-dependent intensity process $\lambda^{\rm dp}=(\lambda^{\rm dp}(X_t))_{t\geq0}$. Here, $\lambda^{\rm dp}(\cdot):\mathbb{R}\to\mathbb{R}_+$ is a measurable intensity function. Let $(T_k^{\rm dp})_{k=1}^{\infty}$ be the sequence of arrival times of the point process $N^{\rm dp}$.  Suppose that data sellers have different valuations for privacy. For $k\geq1$, upon arrival at time $T_k^{\rm dp}$, a provider draws a privacy (random) loss $\xi_k \sim \mu^{\rm dp}$  (\cite{AD19}), where $\mu^{\rm dp}\in{\cal P}(\mathbb{R}_+)$, i.e., it is a probability measure on $\mathbb{R}_+$. For $k\geq1$, the provider sells data if and only if the platform's offered price meets or exceeds this loss, i.e., $p_{T_k^{\rm dp}} \geq \xi_k$ (\cite{XJ15,XJ17}). Conditional on selling, the provider contributes a random data volume $Y_k \sim \nu\in{\cal P}(\mathbb{R}_+)$ for $k\geq1$.  Consequently, the data volume process $X=(X_t)_{t\geq0}$ accumulates according to the following dynamics:
\begin{align}\label{state}
X_t &=X_0-\int_0^t\delta X_sds+\sum_{k=1}^{N_t^{\rm dp}}{\bf1}_{\{p_{T_k^{\rm dp}}\geq \xi_k\}}Y_k+\sigma W_t,~~\forall t\geq0,
\end{align}
where, $\delta>0$ is the data depreciation rate and $\sigma W_t$ with volatility $\sigma>0$ represents the measurement errors at time $t$. Note that Eq. \eqref{state} can be rewritten as follows:
\begin{align}\label{state2}
X_t =X_0-\int_0^t\delta X_sds+\int_0^t\int_{\R_+\times \R_+} y{\bf1}_{\{z\leq \lambda^{\rm dp}(X_{s-})\mu^{\rm dp}(p_s)\}} \,\mathcal{N}(ds, dy,dz)+\sigma W_t,
\end{align}
where $\mathcal{N}(ds, dy, dz)$ is a Poisson random measure with compensator $ds\nu(dy)dz$. 

$\bullet$ \textbf{Data monetization:} 
Consumers arrive according to the Poisson point process $N^{\rm c}=(N_t^{\rm c})_{t\geq 0}$, which is also a doubly stochastic Poisson process with intensity process $\lambda^{\rm c}=(\lambda^{\rm c}(X_t))_{t\geq 0}$. Here, $\lambda^{\rm c}(\cdot):\mathbb{R}\to\mathbb{R}_+$ is a measurable intensity function. Let $(T_k^{\rm c})_{k=1}^{\infty}$ be the sequence of arrival times of the point process $N^{\rm c}$. For $k\geq 1$, the $k$-th consumer with single-unit demand for data product arrives at time $T_k^{\rm c}$, whose willingness-to-pay (WTP) for per unit quality is given by the random variable $\zeta_k\sim \mu^{\rm c}\in {\cal P}(\mathbb{R}_+)$. We can also refer to the random variable $\zeta_k$ as the value profile of consumer $k$ as in \cite{BB24}. Let $g(\cdot):\mathbb{R}\to (0, \infty)$ be a continuous function which measures the quality of the data product. Then, with $k\geq1$, the $k$-th consumer's WTP for a product of quality measured by $g(X_{T_k^{\rm c}})$ is given by $\zeta_kg(X_{T_k^{\rm c}})$ (cf. \cite{AD19} and \cite{NA16}). Thus, the $k$-th consumer purchases the data product if and only if the WTP is at least the posted price, i.e., $\zeta_kg(X_{T_k^{\rm c}}) \geq q_{T_k^{\rm c}}$. 

The platform's objective is to maximize its discounted net profit over the infinite horizon, which is given by, for any admissible pair of price policies $(p,q)=(p_t,q_t)_{t\geq0}\in{\cal U}$ with ${\cal U}$ being the admissible control set which will be specified later:
\begin{align}\label{objective}
J(x;p,q)&:=\mathbb{E}_x\Bigg[\sum_{k=1}^{\infty}e^{-\rho T_k^{\rm c}}q_{T_k^{\rm c}}{\bf1}_{\left\{q_{T_k^{\rm c}}\leq \zeta_kg(X_{T_k^{\rm c}})\right\}}-\sum_{k=1}^{\infty}e^{-\rho T_k^{\rm dp}}p_{T_k^{\rm dp}}{\bf1}_{\left\{p_{T_k^{\rm dp}}\geq \xi_k\right\}}Y_k\nonumber\\
&\quad\qquad-\int_0^{\infty}e^{-\rho t}\Phi(X_t)dt\Bigg],
\end{align}
where $\mathbb{E}_x[\cdot]:=\mathbb{E}[\cdot|X_0=x]$ for $x\in\mathbb{R}$, $\rho>0$ is the discount factor and  $\Phi(\cdot):\mathbb{R}\to\mathbb{R}$ is the cost function. The objective functional of the platform in \eqref{objective}
consists of three parts. The first two terms in the expectation respectively denote revenue generated from sales and payments made to data providers. The last term denotes the operational cost, encompassing expenses related to maintenance, storage and content moderation.
Equivalently, the objective functional \eqref{objective} can be rewritten as follows, for $(p,q)\in{\cal U}$,
\begin{align}\label{eq:Jxpq2nd}
J(x; p,q) = \mathbb{E}_x \left[ \int_0^\infty e^{-\rho t} F(X_t, p_t, q_t)dt \right],\quad \forall x\in\mathbb{R}.
\end{align}
Here, the running profit function $F(\cdot):\mathbb{R}\times \R_+^2\to\mathbb{R}$ is given by
\begin{align}\label{functioF}
F(x, p, q) := \lambda^{\rm c}(x) \left(1 - \mu^{\rm c}\left(\frac{q}{g(x)}\right)\right) q - \lambda^{\rm dp}(x) \mu^{\rm dp}(p) p \int_{0}^{\infty}y\nu(dy)-\Phi(x),
\end{align}
where $\mu^{\rm c}(x):=\mu^{\rm c}((0,x])$ and $\mu^{\rm dp}(x):=\mu^{\rm dp}((0,x])$ for $x\in\R_+$ are distribution functions of probability measures $\mu^{\rm c}\in{\cal P}(\R_+)$ and $\mu^{\rm dp}\in{\cal P}(\R_+)$, respectively.

We impose the following assumptions on model coefficients and parameters:
\begin{assumption}\label{ASS}
\begin{enumerate}\renewcommand{\labelenumi}{\rm(\roman{enumi})}
\item  The intensity function $\lambda^{\rm dp}(\cdot): \R\to\R_+$ is bounded and Lipchitz continuous. The quality function $g(\cdot):\R\to (0, \infty)$ is bounded and continuous.

\item The intensity function $\lambda^{\rm c}(\cdot):\R\to\R_+$ and the cost function $\Phi(\cdot):\R\to \R$  are continuous and satisfy that there exists $m\geq1$ and $L>0$ such that
$|\lambda^{\rm c}(x)|+|\Phi(x)| \leq L(1+|x|^m)$ for $x\in \R$. Moreover, $\Phi(\cdot):\R\to \R$ is convex. The probability distribution $\nu\in{\cal P}(\R_+)$ has the finite $2m$-order moment, i.e., $\int_0^{\infty}y^{2m}\nu(dy)<+\infty$.

\item The distribution function $x\to\mu^{\rm dp}(x)$ is continuous on $\R_+$, is $C^1$ and strictly increasing on $(0, b)$ with $b:=\inf\{x>0, \mu^{\rm dp}(x)=1\}$. Furthermore,  $\mu^{\rm dp}(x)$ has the finite right-derivative at $0$ and the finite left-derivative at $b$, which also satisfies that, for any closed interval $K\subset (0, b)$, there exists a constant $c_K>-1$ such that
\begin{align*}
\frac{\mu^{\rm dp}(x_2)}{(\mu^{\rm dp})'(x_2)}-\frac{\mu^{\rm dp}(x_1)}{(\mu^{\rm dp})'(x_1)}\geq c_K(x_2-x_1),\quad\forall x_1,x_2\in K,~ x_2>x_1.    
\end{align*}
The distribution function $x\to\mu^{\rm c}(x)$ is continuous and satisfies $\frac{1}{1-\mu^{\rm c}(x)}$ is strictly convex on $\{x\in\R_+;~\mu^{\rm c}(x)<1\}$ and $\lim_{x\to\infty }(1-\mu^{\rm c}(x))x=0$.
\item The discount rate $\rho$ satisfies $\rho>\rho_0:=\frac{\sigma^2}{2}\max\{m-1,1\}+\|\lambda^{\rm dp}\|_{\infty}(2^{\frac{m}{2}}\int_0^{\infty}(1+y^2)^{\frac{m}{2}}\nu(dy)-1)$, where $\|\lambda^{\rm dp}\|_{\infty}:=\sup_{x\in\R}|\lambda^{\rm dp}(x)|$.
\end{enumerate}

\end{assumption}

\begin{remark}
The convex structure of $x\to\Phi(x)$ in condition (ii) is economically grounded, reflecting the well-established principle that substantial investments in fixed infrastructure (e.g., storage space) entail progressively steeper marginal expenditures.

Condition (iii) on $\mu^{\rm c}\in{\cal P}(\R_+)$ and $\mu^{\rm dp}\in{\cal P}(\R_+)$ holds for several common distributions, such as the uniform and exponential distributions, which are frequently used in the data pricing literature (\cite{XJ17}, \cite{ZP17} and \cite{JW18}). A similar condition is adopted in \cite{CP21}. The limit condition $\lim_{x\to\infty }(1-\mu^{\rm c}(x))x=0$, also imposed in condition (iii), eliminates the possibility for platforms to make infinite profit by selling zero products at an infinite price (\cite{CP21}).

Under Assumption \ref{ASS}, one can easily verify that the running profit function $F(x,p,q)$ given by \eqref{functioF} is quasi-concave in $(p, q)\in \R_+^2$ for each fixed $x\in \R$.     Moreover, Assumption \ref{ASS} guarantees the existence and uniqueness of solutions to Eq.~\eqref{state2} via Picard iteration (see \cite{IW89}).
\end{remark}

 Let ${\cal U}$ be the set of all admissible pricing strategies which are $\mathbb{F}$-predictable processes $(p, q)=(p_t, q_t)_{t \geq 0}$ taking values on $\R_+^2$ with $\mathbb{E}[\int_0^t(p_s+q_s)ds]<\infty$ for any $t\geq 0$. The value function associated with the objective functional \eqref{eq:Jxpq2nd} is defined by, for any $x\in\R$,
\begin{align}\label{valuefunction2}
V(x):=\sup_{(p,q)\in {\cal U}}J(x; p, q)=\sup_{(p,q)\in{\cal U}}\mathbb{E}_x\left[\int_0^{\infty}e^{-\rho t}F(X_t, p_t,q_t)dt\right].
\end{align}
Then, we have that the value function $x\to V(x)$ satisfies a polynomial growth, which is provided in the following lemma:

\begin{lemma}\label{growth}
Let Assumption \ref{ASS} hold. Then, the value function  $x\to V(x)$ satisfies $|V(x)|\leq M(1+|x|^m)$ for any $x\in\R$. 
\end{lemma}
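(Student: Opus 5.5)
We bound $V$ from above and below separately. In both directions the key ingredient is a moment estimate of the form $\mathbb{E}_x[|X_t|^m]\le C(1+|x|^m)e^{\rho_0 t}$, uniform over admissible controls. Assumption \ref{ASS}(iv) guarantees $\rho>\rho_0$, so this estimate makes the discounted integral converge.

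\textbf{Moment estimate.} Since $m\geq 1$ may not be even, we apply It\^o's formula to the smooth function $h(x)=(1+x^2)^{m/2}$ along \eqref{state2}. The drift of $h(X_t)$ has three contributions.
\begin{itemize}[leftmargin=*]
\item The depreciation term gives $-\delta X h'(X)\le 0$, since $xh'(x)=mx^2(1+x^2)^{m/2-1}\geq0$.
\item The diffusion term gives $\frac{\sigma^2}{2}h''(X)$. A direct computation shows $h''(x)\le \max\{m-1,1\}\,h(x)$.
\item The jump term is $\lambda^{\rm dp}(X)\mu^{\rm dp}(p)\int[h(X+y)-h(X)]\nu(dy)$. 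Using $1+(x+y)^2\le 2(1+x^2)(1+y^2)$, we get $h(x+y)\le 2^{m/2}h(x)(1+y^2)^{m/2}$. Since $\mu^{\rm dp}(p)\le1$, $\lambda^{\rm dp}\le\|\lambda^{\rm dp}\|_\infty$ and the bracket is at most $(2^{m/2}\int(1+y^2)^{m/2}\nu(dy)-1)h(X)$, this contribution is at most $\|\lambda^{\rm dp}\|_\infty\big(2^{m/2}\int(1+y^2)^{m/2}\nu(dy)-1\big)h(X)$.
\end{itemize}
Adding these gives $\mathcal{L}h\le\rho_0 h$, uniformly in $p$. The main technical obstacle is justifying that the local martingale parts have zero expectation. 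We handle this by localizing with stopping times $\tau_n=\inf\{t:|X_t|\ge n\}$, obtaining $\mathbb{E}[e^{-\rho_0 (t\wedge\tau_n)}h(X_{t\wedge\tau_n})]\le h(x)$, and then applying Fatou's lemma. This yields $\mathbb{E}_x[|X_t|^m]\le \mathbb{E}_x[h(X_t)]\le 2^{m/2}(1+|x|^m)e^{\rho_0 t}$. The finite $2m$-moment of $\nu$ ensures the jump integrals are finite.

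\textbf{Upper bound.} The procurement term in $F$ is nonpositive. For the sales term, the substitution $u=q/g(x)$ gives
\[
\lambda^{\rm c}(x)(1-\mu^{\rm c}(q/g(x)))q = \lambda^{\rm c}(x)g(x)(1-\mu^{\rm c}(u))u \le \lambda^{\rm c}(x)\|g\|_\infty C_0, \qquad C_0:=\sup_{u\ge0}(1-\mu^{\rm c}(u))u.
\]
The constant $C_0$ is finite because $u\mapsto(1-\mu^{\rm c}(u))u$ is continuous, vanishes at $u=0$, and tends to $0$ as $u\to\infty$. Hence $F(x,p,q)\le L(\|g\|_\infty C_0+1)(1+|x|^m)$ for every $(p,q)$. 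Integrating against $e^{-\rho t}$ and using the moment estimate, we obtain
\[
J(x;p,q)\le C(1+|x|^m)\int_0^\infty e^{-(\rho-\rho_0)t}dt=\frac{C(1+|x|^m)}{\rho-\rho_0},
\]
with $C$ independent of $(p,q)$. Taking the supremum over $\mathcal{U}$ gives the upper bound on $V$.

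\textbf{Lower bound.} We choose the admissible control $p\equiv0$, $q\equiv0$. Then $F(x,0,0)=-\lambda^{\rm dp}(x)\mu^{\rm dp}(0)\cdot0-\Phi(x)=-\Phi(x)\ge -L(1+|x|^m)$. Note also that $\mu^{\rm dp}(0)=\mu^{\rm dp}((0,0])=0$, so no jumps occur under this control. Applying the same moment bound, $V(x)\ge J(x;0,0)\ge -C(1+|x|^m)/(\rho-\rho_0)$. Combining the two bounds gives $|V(x)|\le M(1+|x|^m)$ with $M$ depending only on $L,\|g\|_\infty,C_0,\rho-\rho_0,m$.
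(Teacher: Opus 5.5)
Your route is the paper's route: It\^o's formula for $h(x)=(1+x^2)^{m/2}$, a moment bound uniform in the controls, the bound $F\le C(1+|x|^m)$ via $\sup_{u\ge0}(1-\mu^{\rm c}(u))u<\infty$, and the null control for the lower bound. Your localization of $e^{-\rho_0 t}h(X_t)$ followed by Fatou is a cleaner finish than the paper's Gronwall step.

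However, your diffusion estimate is false. Computing gives $h''(x)=m(1+x^2)^{\frac m2-2}\bigl(1+(m-1)x^2\bigr)$. Hence $h''(0)=m>\max\{m-1,1\}=\max\{m-1,1\}\,h(0)$ whenever $m>1$, and the correct bound is $h''\le m\max\{m-1,1\}\,h$. So the generator inequality ``$\le\rho_0 h$'' is not proved, and in fact it fails. Take $\lambda^{\rm dp}\equiv0$ (allowed by Assumption~\ref{ASS}) and $m>1$. The generator of $X$ applied to $h$ at $x=0$ equals $\sigma^2 m/2>\rho_0=\rho_0 h(0)$. Therefore $\mathbb{E}_0[e^{-\rho_0 t}h(X_t)]>h(0)$ for small $t>0$, which contradicts the bound $\mathbb{E}_x[e^{-\rho_0 t}h(X_t)]\le h(x)$ that you derive. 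Keeping the corrected constant instead would require $\rho>\frac{\sigma^2}{2}m\max\{m-1,1\}+\kappa$ (with $\kappa$ defined below), which Assumption~\ref{ASS}(iv) does not give. The paper's proof makes the same slip: the factor $m$ in its It\^o term $\frac{\sigma^2 m}{2}\int_0^t\cdots ds$ disappears when expectations are taken.

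The repair is to keep the depreciation term instead of discarding it. Since $xh'(x)=mx^2(1+x^2)^{\frac m2-1}$, we have
\begin{align*}
\frac{\sigma^2}{2}h''(x)-\delta x h'(x)=m(1+x^2)^{\frac m2-2}\Bigl(\frac{\sigma^2}{2}\bigl(1+(m-1)x^2\bigr)-\delta x^2(1+x^2)\Bigr)\longrightarrow-\infty
\end{align*}
as $|x|\to\infty$. This expression is therefore bounded above by a constant $C_1=C_1(m,\sigma,\delta)$. Your jump estimate is correct. Combining the two, the generator applied to $h$ is at most $C_1+\kappa h$, uniformly in $p$, where $\kappa:=\|\lambda^{\rm dp}\|_\infty\bigl(2^{m/2}\int_0^\infty(1+y^2)^{m/2}\nu(dy)-1\bigr)\ge0$. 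Running your localization and Fatou argument on $e^{-\kappa t}h(X_t)$ gives $\mathbb{E}_x[h(X_t)]\le e^{\kappa t}\bigl(h(x)+C_1 t\bigr)$. Since $\rho>\rho_0\ge\kappa$, we have $\int_0^\infty e^{-(\rho-\kappa)t}(1+t)\,dt<\infty$. Your upper and lower bounds then go through verbatim, with constants depending on $\rho-\kappa$ instead of $\rho-\rho_0$.
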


\begin{proof} For any admissible pair of pricing strategies $(p,q)\in {\cal U}$, let $X^x=(X_t^x)_{t\geq 0}$ be the controlled state process given by \eqref{state2} with $X_0^x=x\in \R$.  Applying It\^o's rule to $(1+|X_t^x|^2)^{\frac{m}{2}}$ for $m\geq 1 $, we have that
\begin{align}\label{eq:growth}
&\left(1+|X_t^x|^2\right)^{\frac{m}{2}}=(1+|x|^2)^{\frac{m}{2}}-\delta \int_0^t|X_s^x|^2m(1+|X_s|^2)^{\frac{m-2}{2}}ds\\
&\quad+\int_0^t\sigma m (1+|X_s^x|^2)^{\frac{m-2}{2}}dW_s+\frac{\sigma^2 m}{2}\int_0^t\left(1+|X_s^x|^2\right)^{\frac{m-4}{2}}\left((m-1)|X_s^x|^2+1\right)ds\nonumber\\
&\quad+\int_0^t\int_{\R_+^2}{\bf1}_{\{z\leq \lambda^{\rm dp}(X_{s-}^x)\mu^{\rm dp}(p_{s-}) \}}\left(\left(1+|X_{s-}^x+y|^2\right)^{\frac{m}{2}}-\left(1+|X_{s-}^x|^2\right)^{\frac{m}{2}}\right)\mathcal{N}(ds,dy,dz). \nonumber 
\end{align}
Note that, it holds that
\begin{align*}
\left(1+|X_s^x|^2\right)^{\frac{m-4}{2}}\left((m-1)|X_s^x|^2+1\right)&\leq \max\{m-1,1\}\left(1+|X_s^x|^2\right)^{\frac{m-2}{2}}\nonumber\\
&\leq \max\{m-1,1\}\left(1+|X_s^x|^2\right)^{\frac{m}{2}},
\end{align*}
and
\begin{align*}
\left(1+|X_{s-}^x+y|^2\right)^{\frac{m}{2}}-\left(1+|X_{s-}^x|^2\right)^{\frac{m}{2}}&\leq \left(1+2|X_{s-}^x|^2+2y^2\right)^{\frac{m}{2}}-\left(1+|X_{s-}^x|^2\right)^{\frac{m}{2}}\\
&\leq \left((1+y^2)^{\frac{m}{2}}2^{\frac{m}{2}}-1\right)\left(1+|X_{s-}^x|^2\right)^{\frac{m}{2}}.
\end{align*}
Taking expectations on both sides of \eqref{eq:growth} by a localized argument, and using the inequality $(1+|x|^2)^{\frac{m}{2}}\leq \max\{1, 2^{\frac{m-2}{2}}\}(1+|x|^m)$, we have
\begin{align*}
\mathbb{E}\left[\left(1+|X_t^x|^2\right)^{\frac{m}{2}}\right]&\leq (1+|x|^2)^{\frac{m}{2}}+\frac{\sigma^2}{2}\max\{m-1,1\}\int_0^t\mathbb{E}\left[\left(1+|X_s^x|^2\right)^{\frac{m}{2}}\right]ds\\
&\quad+\int_0^{\infty}\left((1+y^2)^{\frac{m}{2}}2^{\frac{m}{2}}-1\right)\nu(dy)\int_0^t\mathbb{E}\left[\lambda^{\rm dp}(X_{s-}^x)\left(1+|X_{s-}^x|^2\right)^{\frac{m}{2}}\right]ds\\
&\leq  \max\{1,	2^{\frac{m-2}{2}}\}(1+|x|^m)+\rho_0\int_0^t\mathbb{E}\left[\left(1+|X_{s}^x|^2\right)^{\frac{m}{2}}\right]ds
\end{align*}
with the constant $\rho_0:=\frac{\sigma^2}{2}\max\{m-1,1\}+\|\lambda^{\rm dp}\|_{\infty}(2^{\frac{m}{2}}\int_0^{\infty}(1+y^2)^{\frac{m}{2}}\nu(dy)-1)$. It can be deduced from the Gronwall's inequality that $\mathbb{E}[|X_t^x|^m]\leq \mathbb{E}[(1+|X_t^x|^2)^{\frac{m}{2}}]\leq 	C_m(1+|x|^m)e^{\rho_0t}$ for $C_m>0$ depending only on $m$. 

We first consider the null control $p_t = q_t = 0$ for $t \geq 0$. It follows from \eqref{eq:Jxpq2nd} and the moment estimate of $X$ that for $\rho >\rho_0$, 
\begin{align*}
	V(x) &\geq -\mathbb{E} \left[ \int_0^{\infty} e^{-\rho t} \Phi(X_t^x) dt \right] 
		   \geq -\int_0^{\infty} e^{-\rho t} L \left(1 + \mathbb{E}[|X_t^x|^m]\right) dt \\
		&\geq -\frac{L}{\rho} - L C_m \int_0^{\infty} e^{-(\rho-\rho_0)t} (1+|x|^m) dt  
		\geq -\frac{L}{\rho} - \frac{L C_m}{\rho-\rho_0}(1+|x|^m).
\end{align*}
For $q\in \R_+$, we define $h_1(q):=(1-\mu^{\rm c}(q))q$. From Assumption \ref{ASS}, we know $h_1(0)=0$ and $\lim_{q\to\infty}h_1(q)=0$. Choose $q_0\in\R_+$ such that $h_1(q_0)>0$. Since $\lim_{q\to\infty}h_1(q)=0$, there exists a sufficiently large constant $C>q_0$ such that $h_1(q)<h_1(q_0)$ for all $q>C$. Because $q\to h_1(q)$ is continuous in $\R_+$, there exists a point $\hat{q}\in [0,C]$ such that  $h_1(\hat{q})=\max_{q\in [0, C]}h_1(q)$. For any $q>C$, we have  $h_1(q)<h_1(q_0)\leq h_1(\hat{q})$. Hence, $\hat{q}$ is a global maximizer of $h_1$ on $\R_+$. Moreover, $h_1(\hat{q})=\sup_{q\in \R_+}h_1(q)<\infty$. Then there exists $C>0$, 
\begin{align*}
	F(x, p, q) &\leq \lambda^{\rm c}(x) \left(1 - \mu^{\rm c}\left(\frac{q}{g(x)}\right)\right) q - \Phi(x) \\
		&\leq \lambda^{\rm c}(x) g(x) \sup_{q\in \mathbb{R}_+} \left\{ \left(1 - \mu^{\rm c}(q)\right)q\right\} - \Phi(x) 
		\leq C(1+|x|^m).
\end{align*}
It follows from \eqref{eq:Jxpq2nd} that, for any $(p,q)\in{\cal U}$ and  $\rho>\rho_0$,
\begin{align*}
	J(x; p, q) \leq \int_0^{\infty} e^{-\rho t} C \left(1 + \mathbb{E}[|X_t^x|^m]\right) dt 
	\leq \frac{C}{\rho} + \frac{C C_m}{\rho-\rho_0}(1+|x|^m),
\end{align*}
where $C>0$ is a constant independent of $x, p, q$.
Due to the arbitrariness of $(p, q) \in {\cal U}$, we have $V(x)\leq \frac{C}{\rho} + \frac{C C_m}{\rho-\rho_0}(1+|x|^m)$. Thus, we can conclude that $|V(x)| \leq M(1+|x|^m)$ for some positive constant $M$.
\end{proof}

Using the dynamic programming principle (DPP), we have the following HJB equation satisfied by the value function $V(x)$ on $\R$ formally:
\begin{align}\label{eq:HJB}
\rho V(x) &= \frac{1}{2}\sigma^2V''(x)-\delta x V'(x)- \Phi(x)+ \lambda^{\rm c}(x)\sup_{q\in \R_+} \left\{ \left(1-\mu^{\rm c}\left(\frac{q}{g(x)}\right)\right)q \right\}  \nonumber \\
&\quad + \lambda^{\rm dp}(x)\sup_{p\in \R_+}\left\{\mu^{\rm dp}(p) \int_0^\infty \left( V(x+y) - V(x)-py \right) \nu(dy) \right\}.
\end{align}
The following lemma facilitates the characterization of the optimal prices, provided that the value function is a classical solution to the HJB equation, a result we will establish in the next section.
\begin{lemma}\label{control}
Let Assumptions \ref{ASS} hold. If the value function $x\to V(x)$ is a classical solution to the HJB equation \eqref{eq:HJB}, there exists a unique pair of measurable functions $x\to(p^*(x),q^*(x))$ such that, for $x\in\R$, 
\begin{align}\label{controlfuncition}
H_V^{({\rm dp})}(x; p^*)&=\sup_{p\in \R_+}H_V^{({\rm dp})}(x; p):=\sup_{p\in \R_+} \left\{\mu^{\rm dp}(p) \int_0^\infty \left(V(x+y) - V(x)-py \right) \nu(dy) \right\},\nonumber\\
H^{({\rm c})}(x; q^*)&=\sup_{q\in \R_+}H^{({\rm c})}(x; q):=\sup_{q\in \R_+}\ \left\{ \left(1-\mu^{\rm c}\left(\frac{q}{g(x)}\right)\right)q \right\}.
	\end{align}
   Moreover, the mapping $x\to p^*(x)$ is locally Lipschitz continuous, and both $H_V^{({\rm dp})}(\cdot; p^*(\cdot))$ and $H^{({\rm c})}(\cdot; q^*(\cdot))$ are continuous on $\R$.
\end{lemma}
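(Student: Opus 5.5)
The plan is to treat the two maximization problems separately, since the consumer side decouples from $V$ entirely.

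\textbf{Consumer side.} Substituting $u=q/g(x)$ gives $H^{({\rm c})}(x;q)=g(x)\,h_1(u)$, where $h_1(u)=(1-\mu^{\rm c}(u))u$. The proof of Lemma \ref{growth} already shows that $h_1$ attains its global maximum on $\R_+$ at some $\hat u$, using $h_1(0)=0$, $h_1\to0$ at infinity and continuity.

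To get uniqueness, I would write $h_1(u)=u/\psi(u)$ with $\psi:=1/(1-\mu^{\rm c})$, which is strictly convex on $\{\mu^{\rm c}<1\}$ by Assumption \ref{ASS}(iii). A maximizer is interior to that set, since $h_1=0$ outside it. The level set $\{u:\ u\geq c\,\psi(u)\}$ is strictly convex for $c>0$, so if $u_1\ne u_2$ both attained the maximum value $c^*$, then at the midpoint strict convexity would give $\psi(\bar u)<\bar u/c^*$, i.e.\ $h_1(\bar u)>c^*$, a contradiction. Hence $\hat u$ is unique and $q^*(x)=g(x)\hat u$, which is continuous because $g$ is. Consequently $H^{({\rm c})}(x;q^*(x))=g(x)h_1(\hat u)$ is continuous as well.

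\textbf{Provider side.} Set $a(x):=\int_0^\infty (V(x+y)-V(x))\nu(dy)$ and $\bar y:=\int y\,\nu(dy)$. This is finite and continuous in $x$ by the polynomial growth of Lemma \ref{growth}, the moment assumption on $\nu$, continuity of $V$ and dominated convergence. If $V\in C^2$, then $a$ is also locally Lipschitz, since $|V(x+y)-V(x'+y)|\le \sup_{[x,x'+y]}|V'|\,|x-x'|$ and $V'$ has polynomial growth. I will argue that this growth bound follows from the ODE, or that it can be obtained by local arguments.

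We then maximize $\phi(p)=\mu^{\rm dp}(p)(a-p\bar y)$. Prices $p\ge b$ are dominated by $p=b$, and when $a\le0$ the choice $p=0$ is optimal, with value $0$ (taking $p^*=0$, noting $\mu^{\rm dp}(0)=0$). For $a>0$ the maximizer lies in $(0,\min\{b,a/\bar y\})$, and the first-order condition reads
\[
a/\bar y = p+\frac{\mu^{\rm dp}(p)}{(\mu^{\rm dp})'(p)}=:\kappa(p).
\]
The hypothesis with $c_K>-1$ makes $\kappa$ strictly increasing on compacts of $(0,b)$, since $\kappa(x_2)-\kappa(x_1)\ge(1+c_K)(x_2-x_1)$. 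So the critical point is unique, and it is a maximum because $\phi'$ changes sign from positive to negative. This gives a unique $p^*(x)=\kappa^{-1}(a(x)/\bar y)$ (truncated at $b$ if needed).

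The same inequality shows that $\kappa^{-1}$ is Lipschitz with constant $1/(1+c_K)$ on the relevant ranges. Composed with the locally Lipschitz $a$, this gives local Lipschitz continuity of $p^*$; the transition at $a=0$ is handled by $\kappa(0^+)=0$. Continuity of $H^{({\rm dp})}_V(x;p^*(x))=\phi_{a(x)}(p^*(x))$ then follows from joint continuity.

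\textbf{Main obstacle.} The delicate point is the behaviour near the endpoints $0$ and $b$, where $c_K$ is only available on compact subsets $K\subset(0,b)$. I would handle it first by using the finite one-sided derivatives of $\mu^{\rm dp}$ at $0$ and $b$ to extend $\kappa$ continuously to $[0,b]$. Then I would restrict $x$ to a compact set, where $a(x)$ ranges over a compact interval and $p^*$ stays in a corresponding compact subset of $[0,b]$.
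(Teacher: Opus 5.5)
Your proposal follows the paper's proof essentially step for step. Like the paper, you decouple the two suprema and get uniqueness of $\hat q$ from strict quasi-concavity of $h_1$ via strict convexity of $1/(1-\mu^{\rm c})$; the paper cites Boyd--Vandenberghe where you give the midpoint argument. You then obtain $p^*(x)=\hat p(z(x))$ from the first-order condition $z=f(p)=p+\mu^{\rm dp}(p)/(\mu^{\rm dp})'(p)$ in the regimes $z\le 0$, $0<z<b_f$ and $z\ge b_f$. The last regime gives the corner $p^*=b$, so your interval $(0,\min\{b,a/\bar y\})$ should be closed at $b$. Local Lipschitz continuity comes from the $c_K>-1$ bound composed with the locally Lipschitz map $z(x)$.

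The two points you flag are exactly where the paper is also thin. First, the polynomial growth of $V'$ at $+\infty$ does follow from the ODE: multiply by $e^{-\delta x^2/\sigma^2}$ and use the polynomial growth of $V$ to rule out the $e^{\delta x^2/\sigma^2}$ mode. Second, for crossing $z=0$ the paper uses $f(p)\ge p$, i.e.\ $\hat p(z)\le z$, which is what you need rather than $\kappa(0^+)=0$. However, neither that argument nor your continuous extension of $\kappa$ gives a uniform slope bound for $0<z_1<z_2$ near $0$ (or near $b$), unless the $c_K$ hypothesis is read on closed subintervals of $[0,b]$.
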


\begin{proof}

Let $x\in\R$ be fixed. For a given $z \in \mathbb{R}$, define $h_2(p) := \mu^{\mathrm{dp}}(p)(z - p)$ for $p>0$. By using Assumption \ref{ASS}-(iii), $p\to h_2(p)$ is continuous on $\mathbb{R}_+$. Define the following auxiliary function by
\begin{align*}
f(p) := p + \frac{\mu^{\mathrm{dp}}(p)}{(\mu^{\mathrm{dp}})'(p)}, \quad \forall p \in (0, b).
\end{align*}
Assumption \ref{ASS}-(iii) implies that $f$ is strictly increasing on $(0, b)$ with $f(0^+) = 0$. Let $b_f := \lim_{p \to b^-} f(p) = \frac{1}{(\mu^{\rm dp})'(b^-)} + b\in (b, \infty]$. We now consider three cases.

\begin{itemize}
\item If $z\leq 0$, then $h_2(p)\leq 0$ for all $p\geq 0$, and the unique maximizer, denoted by $\hat{p}$, is $\hat{p}=0$.  

\item If $z \geq b_f$, $h_2'(p) = (\mu^{\rm dp})'(p)(z - f(p))$ for $p \in (0, b)$. Since $z \geq b_f > f(p)$ and $(\mu^{\rm dp})'(p) > 0$, we have $h_2'(p) > 0$. Thus $h_2(p)$ is strictly increasing on $[0, b]$. For $p \geq b$, since $\mu^{\rm dp}(p) = 1$, we have $h_2(p) = z - p$, which is strictly decreasing in $p$ with $h_2'(p) = -1 < 0$. Combining both, the function $h_2(p)$ increases on $[0, b]$ and decreases on $[b, \infty)$. Thus, the unique global maximizer on $\R_+$ is $\hat{p}(z) = b$.

\item If $z\in (0,b_f)$, by Assumption \ref{ASS}-(iii), there exists a unique $\hat{p} \in (0, b)$ such that $f(\hat{p}) = z$, which means $h_2'(\hat{p}) = 0$. For $p \in (0, \hat{p})$, $f(p) < f(\hat{p}) = z$ and $ h_2'(p) > 0$. For $p \in (\hat{p}, b)$, $f(p) > f(\hat{p}) = z $ and $h_2'(p) < 0$. For $p \geq b$, $h_2(p) = z - p$ is strictly decreasing in $p$, so $h_2'(p) = -1 < 0$. Therefore, $\hat{p}(z) = f^{-1}(z) \in (0, b)$ is the unique global maximizer of $p\to h_2(p)$ on $\R_+$, where $f^{-1}$ denotes the inverse function of $f$. 
\end{itemize}

Next, we show that the function $z\to \hat{p}(z)$ is locally Lipschitz continuous. Let $K \subset \R$ be an arbitrary compact interval and $z_1,z_2\in K$ with $z_1<z_2$. For $K\subset (0, b_f)$, the set $K':=f^{-1}(K)\subset (0, b)$ is also compact. By the local uniform monotonicity of $f$ given by Assumption \ref{ASS}-(iii), there exists $c_K>-1$ such that
\begin{align*}
z_2-z_1 = f(\hat{p}(z_2)))-f(\hat{p}(z_1))) \geq (c_K+1)(\hat{p}(z_2))-\hat{p}(z_1))).    
\end{align*}
Thus, it holds that $|\hat{p}(z_2)-\hat{p}(z_1)|=|f^{-1}(z_2)-f^{-1}(z_1)| \leq \frac{1}{c_K+1}|z_2-z_1|$.
For $K\subset (-\infty, 0)$ and $K\subset (b_f, \infty)$, $\hat{p}(z_1)=\hat{p}(z_2)$. For $z_1 \le 0$ and $z_2 \in (0, b_f)$, we have $\hat{p}(z_1)=0$ and $f(\hat{p}(z_2)) = z_2$, which yields that
\begin{align*}
|\hat{p}(z_2)-\hat{p}(z_1)|= |\hat{p}(z_2)-0|\leq z_2\leq |z_2-z_1|.    
\end{align*}
For $z_1 \in (0, b_f)$ and $z_2 \geq b_f$, it holds that $\hat{p}(z_2) = b$ and $f(\hat{p}(z_1)) = z_1$. By the local uniform monotonicity of $f$, there exists $c_K > -1$ such that 
\[|z_2 - z_1| = z_2 - z_1 \geq b_f - z_1 =f(b^-) - f(\hat{p}(z_1)) \geq (c_K+1)(b - \hat{p}(z_1)) = (c_K+1)|\hat{p}(z_2) - \hat{p}(z_1)|,\]
which implies $|\hat{p}(z_2) - \hat{p}(z_1)| \le \frac{1}{c_K+1}|z_2 - z_1|$.
Therefore, $z\to\hat{p}(z)$ is locally Lipschitz continuous on $\R$. 

Let us introduce $z(x):=\frac{\int_0^\infty \bigl(V(x+y)-V(x)\bigr)\nu(dy)}{\int_0^\infty y\,\nu(dy)}$.
Since $V\in C^2(\mathbb R)$ and $\int_0^\infty y\,\nu(dy)<\infty$, the mapping $x\to z(x)$ is locally Lipschitz continuous. Consequently, $x\to p^*(x)	=\hat{p}(z(x))$ is locally Lipschitz continuous  (hence measurable) as a composition of two locally Lipschitz mappings. Moreover, $x\to H_V^{\rm (dp)}(x;p^*)$ is continuous on $\R$.
    
On the other hand, we have obtained the existence of maximizers of $q\to h_1(q)$ on $\R_+$ in the proof of Lemma \ref{growth}. By Example 3.38 in \cite{BV04}, the function $ q\to h_1(q)$ is strictly quasi-concave on $\{q\in \R_+;~ \mu^{\rm c}(q) < 1\}$. Note that $\lim_{q\to\infty}h_1(q)=0$ and $h_1(0)=0$ (see Fig.\ref{fig:combinedh1}), the existence and uniqueness of the maximizer, denoted by $\hat{q}$, is established.  The same conclusion holds for $q\to H^{({\rm c})}(x;q)$ since $H^{({\rm c})}(x;q)=h_1\left(\frac{q}{g(x)}\right)g(x)$ for $q\in \R_+$ and $x\to g(x)$ is independent of the variable $q$. Denote by $q^*(x):=\hat{q}g(x)$ the unique maximizer of $q\to H^{\rm (c)}(x;q)$. Then, $x\to H^{\rm (c)}(x;q^*(x))$ is continuous on $\R$.
Thus, the proof of the lemma is complete.
\end{proof}

\begin{figure}[ht]
\centering

\begin{tikzpicture}
\begin{axis}[
    width=9cm,
    height=6cm,
    xmin=0,
    xmax=10,
    ymin=-0.1,
    ymax=0.55,
    axis lines=middle,
    xtick=\empty,
    ytick=\empty,
    xlabel={$q$},
    ylabel={$h_1(q)$},
    xlabel style={anchor=west},
    ylabel style={anchor=south},
    axis line style={-stealth},
    samples=500,
    smooth,
    thick,
]

\addplot[blue, domain=0:10] {x*exp(-0.8*x)};
\addplot[only marks, mark=*, black] coordinates {(1.25,0.46)};
\addplot[dashed, red] coordinates {(1.25,0.46) (1.25,0)};
\addplot[only marks, mark=*] coordinates {(1.25,0)};
\node at (axis cs:1.25,-0.02) [anchor=north] {$q^*$};
\addplot[dashed, gray] coordinates {(0,0) (10,0)};
\addplot[only marks, mark=*] coordinates {(6,0)};
\end{axis}
\end{tikzpicture}

\caption{The curve of $q\to h_1(q):=(1-\mu^{\rm c}(q))q$ on $\R_+$.}
\label{fig:combinedh2}
\end{figure}

\begin{figure}[ht]
\centering

\begin{tikzpicture}
\begin{axis}[
    width=9cm,
    height=6cm,
    xmin=0,
    xmax=4,
    ymin=-0.8,
    ymax=1.6,
    axis lines=middle,
    xtick=\empty,
    ytick=\empty,
    xlabel={$p$},
    ylabel={$h_2(p)$},
    xlabel style={anchor=west},
    ylabel style={anchor=south},
    axis line style={-stealth},
    samples=500,
    smooth,
    thick,
]

\addplot[blue, domain=0:4] {(1-exp(-4*x))*(2.2-x)};
\addplot[dashed, gray] coordinates {(0,0) (4,0)};
\addplot[only marks, mark=*] coordinates {(2.2,0)};
\node at (axis cs:2.2,-0.05) [anchor=north] {$z$};
\addplot[only marks, mark=*] coordinates {(3,0)};
\addplot[only marks, mark=*, black] coordinates {(0.5,1.47)};
\addplot[dashed, red] coordinates {(0.5,1.47) (0.5,0)};
\addplot[only marks, mark=*] coordinates {(0.5,0)};
\node at (axis cs:0.55,-0.05) [anchor=north] {$p^*$};
\end{axis}
\end{tikzpicture}
\caption{The curve of $p\to h_2(p):=\mu^{\rm dp}(p)(z-p)$ on $\R_+$.}
\label{fig:combinedh1}
\end{figure}

\section{Viscosity Solution}\label{sec:viscosity}

In this section, we study the well-posedness of the  integro-differential HJB (ID-HJB) equation \eqref{eq:HJB} within the framework of viscosity solutions. The need for this framework arises from the presence of the non‑local integral operator in \eqref{eq:HJB}, which prevents us from directly applying classical PDE theory. Viscosity solutions allow us to handle fully nonlinear and non‑local equations without requiring the value function to be a priori differentiable.

To proceed, let us introduce the following differential operator ${\cal L}$ acted on $\phi\in C^2(\R)$ and non-local integral operator ${\cal I}$ acted on $\psi\in C(\R)$, respectively, for $x\in\R$, 
\begin{align}
\mathcal{L}\phi(x)&:=\frac{1}{2}\sigma^2\phi''(x)-\delta x\phi'(x)- \Phi(x)+ \lambda^{\rm c}(x)\sup_{q\in \R_+} \left\{ \left(1-\mu^{\rm c}\left(\frac{q}{g(x)}\right)\right)q \right\},\label{eq:operatorL}\\
\mathcal{I}\psi(x)&:=\lambda^{\rm dp}(x)\sup_{p\in \R_+}\left\{\mu^{\rm dp}(p) \int_0^\infty \left( \psi(x+y) - \psi(x)-py \right) \nu(dy) \right\}.\label{eq:operatorI}
\end{align}
We now give the definition of viscosity solutions to ID-HJB equation \eqref{eq:HJB}:
\begin{definition}\label{def1}
\begin{enumerate}
\item[{\rm(i)}] A lower semi‑continuous (l.s.c) function $x\to v(x)$ is a viscosity supersolution of Eq.~\eqref{eq:HJB} if, for any $\bar{x}\in\R$ and test function $\varphi\in C^2(\R)$ such that $v-\varphi$ attains a minimum at $\bar{x}$ with $(v-\varphi)(\bar{x})=\min_{x\in \R}(v-\varphi)(x)$, it holds that $\rho v(\bar{x})-\mathcal{L}\varphi(\bar{x})-\mathcal{I}\varphi(\bar{x})\geq 0$. 

\item[{\rm(ii)}] An upper semi‑continuous (u.s.c) function $x\to v(x)$ is a viscosity subsolution of Eq.~\eqref{eq:HJB} if, for any $\bar{x}\in\R$ and test function $\varphi\in C^2(\R)$ such that $u-\varphi$ attains a maximum at $\bar{x}$ with $(v-\varphi)(\bar{x})=\max_{x\in \R}(v-\varphi)(x)$, it holds that $\rho v(\bar{x})-\mathcal{L}\varphi(\bar{x})-\mathcal{I}\varphi(\bar{x})\leq 0$ . 

\item[{\rm(iii)}] A locally bounded function $x\to v(x)$ is a viscosity solution of Eq.~\eqref{eq:HJB} if its u.s.c. envelope $x\to v^*(x)$ and its l.s.c. envelope $x\to v_*(x)$ are respectively the subsolution and supersolution to Eq.~\eqref{eq:HJB}.
\end{enumerate}
\end{definition}

Then, we have the following result on the existence of viscosity solutions to ID-HJB equation \eqref{eq:HJB}.
\begin{lemma}\label{existence}
Let Assumption \ref{ASS} hold. Then, the value function $x\to V(x)$ defined by \eqref{valuefunction2} is a viscosity solution to the ID-HJB equation \eqref{eq:HJB}. 
\end{lemma}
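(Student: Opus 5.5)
The plan is the standard dynamic-programming route for controlled jump-diffusions. We first fix a weak dynamic programming principle and then derive the super- and subsolution inequalities from it by the usual contradiction arguments. The DPP we take (as is standard; cf. the references \cite{BI08}) is: for any stopping time $\tau$, possibly localized to small balls,
\[
V(x)=\sup_{(p,q)\in\mathcal U}\mathbb E_x\Big[\int_0^{\tau}e^{-\rho t}F(X_t,p_t,q_t)\,dt+e^{-\rho\tau}V(X_\tau)\Big].
\]
Because $V$ need not be continuous a priori, we state it in its weak form, with $V_*$ and $V^*$ on the right-hand side. Lemma \ref{growth} gives the polynomial bound $|V(x)|\le M(1+|x|^m)$, and the moment estimate $\mathbb E[|X_t|^m]\le C_m(1+|x|^m)e^{\rho_0 t}$ from its proof, combined with $\rho>\rho_0$, supplies the integrability needed for the DPP and for the nonlocal terms. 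In particular $\int_0^\infty V(x+y)\nu(dy)$ is finite because $\nu$ has a finite $m$-th moment. This also shows that $V^*,V_*$ are finite and of polynomial growth, so the envelopes in Definition \ref{def1} make sense.

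\emph{Supersolution property.} Let $\varphi\in C^2$ with $V_*-\varphi$ attaining a global minimum $0$ at $\bar x$. Take $x_n\to\bar x$ with $V(x_n)\to V_*(\bar x)$, so that $\gamma_n:=V(x_n)-\varphi(x_n)\to0$. Use constant controls $(p,q)$ with $q=q^*(\bar x)$ and $p$ arbitrary, together with the stopping time $\tau_n=h_n\wedge\inf\{t:|X_t-x_n|\ge1\}$, where $h_n\to 0$ and $\gamma_n/h_n\to0$. The DPP and $V\ge V_*\ge\varphi$ give
\[
\varphi(x_n)+\gamma_n\ge\mathbb E\Big[\int_0^{\tau_n}e^{-\rho t}F\,dt+e^{-\rho\tau_n}\varphi(X_{\tau_n})\Big].
\]
Apply It\^o's formula to $e^{-\rho t}\varphi(X_t)$; the jump part has compensator $\lambda^{\rm dp}(X)\mu^{\rm dp}(p)\int(\varphi(X+y)-\varphi(X))\nu(dy)$. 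Divide by $\mathbb E[\tau_n]\sim h_n$ and let $n\to\infty$, using continuity of $\lambda^{\rm dp},\lambda^{\rm c},g,\Phi$ and dominated convergence. This yields
\[
\rho\varphi(\bar x)\ge \tfrac12\sigma^2\varphi''-\delta\bar x\varphi'-\Phi+\lambda^{\rm c}H^{({\rm c})}(\bar x;q)+\lambda^{\rm dp}\mu^{\rm dp}(p)\!\int(\varphi(\bar x+y)-\varphi(\bar x)-py)\nu(dy).
\]
Taking the supremum over $p$ and $q$, and noting $\varphi(\bar x)=V_*(\bar x)$, gives the required inequality $\rho V_*(\bar x)-\mathcal L\varphi(\bar x)-\mathcal I\varphi(\bar x)\ge0$. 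The test function must be used globally in the nonlocal term, which is allowed because the minimum is global by definition. The one technical point is that $\varphi$ may grow faster than polynomially; we would first modify $\varphi$ outside a large ball so that it has polynomial growth while still lying below $V_*$. This is legitimate since $V_*$ itself has polynomial growth.

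\emph{Subsolution property.} This is the main obstacle, because the control is arbitrary and we cannot test with a single constant control. We argue by contradiction. Suppose $V^*-\varphi$ attains a global maximum $0$ at $\bar x$ and $\rho\varphi(\bar x)-\mathcal L\varphi(\bar x)-\mathcal I\varphi(\bar x)=2\varepsilon>0$. First replace $\varphi$ by $\varphi+\eta|x-\bar x|^4$, so that the maximum is strict; this changes the operator values only by $O(\eta)$, because the nonlocal term involves $\int |y|^4\nu(dy)<\infty$. By continuity, which here uses local Lipschitz continuity of $\lambda^{\rm dp}$ and the uniform control of the suprema as in Lemma \ref{control}, the strict inequality $\ge\varepsilon$ persists on a ball $B_r(\bar x)$, \emph{uniformly in} $(p,q)$. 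For this uniformity we need the bounds $\sup_q H^{({\rm c})}<\infty$ and $p\mapsto\mu^{\rm dp}(p)(\cdot-py)$ bounded above. Outside $B_r$, strictness gives $V^*\le\varphi-\kappa$ for some $\kappa>0$.

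Now take $x_n\to\bar x$ with $V(x_n)\to V^*(\bar x)$, any control, and $\tau$ the exit time from $B_r$ capped at $h$. Apply It\^o's formula to $\varphi$ up to $\tau$ and bound the right-hand side of the DPP. The interior contributes a gain of at least $\varepsilon\mathbb E[\tau]$. A jump that lands outside $B_r$ is controlled by the global inequality $V^*\le\varphi-\kappa$ on the complement. Together these give $V(x_n)\le\varphi(x_n)-c$ for some $c>0$ independent of the control, using the standard lower bound on $\mathbb E[\tau]+\mathbb P(\text{exit})$ available for a nondegenerate diffusion with $\sigma>0$. This contradicts $V(x_n)-\varphi(x_n)\to0$.

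Making the uniformity over unbounded prices $p$ rigorous is the delicate point. It works because large $p$ only decreases the $p$-dependent term, so $\sup_p$ is effectively attained on $[0,b]$.
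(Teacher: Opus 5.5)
Your argument is correct in substance, and your supersolution half is the same as the paper's. Both use constant controls, an exit time capped at $h_n\to0$ with $\gamma_n/h_n\to0$, It\^o's formula, division by $h_n$, and then a supremum over $(p,q)$.

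\textbf{Subsolution: a different mechanism.} The paper takes $\tfrac{\varepsilon h_n}{2}$-optimal controls and uses only the global bound $V\le\varphi$ at the terminal time. It closes the contradiction by showing $\mathbb{E}[\theta_n]/h_n\to1$ uniformly over controls. That step rests on the moment/Chebyshev estimate \eqref{cheb}, and hence on $\|\lambda^{\rm dp}\|_\infty<\infty$ and on the control-independence of the drift, the volatility and the jump-size law.

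You instead use the weak DPP with $V^*$ on the right-hand side, a fixed horizon $h$, and a global gap $V^*\le\varphi-\kappa$ off $B_r(\bar x)$. For every control the right-hand side is then at most $\varphi(x_n)-e^{-\rho h}\min\{\varepsilon h,\kappa\}$. This follows from the elementary bound $\mathbb{E}[\tau]\ge h\,\mathbb{P}(\tau=h)$; no nondegeneracy or exit-time estimate is actually needed.

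What your route buys:
\begin{itemize}
\item no $\varepsilon$-optimal selection;
\item no small-time moment bound;
\item rigour while $V$ is not yet known to be continuous, since you use the envelope form of the DPP, whereas the paper applies the DPP to $V$ itself, evaluated at $X_{\tau-}$.
\end{itemize}
The price is that you need a gap that is uniform on the unbounded set $\mathbb{R}\setminus B_r(\bar x)$. That gap comes from the global polynomial perturbation itself. It does not come from strictness plus upper semicontinuity, which only gives a gap on compact annuli.

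\textbf{Small corrections.}
\begin{itemize}
\item Perturb by $\eta|x-\bar x|^2$, not $\eta|x-\bar x|^4$. Assumption \ref{ASS} only gives $\int_0^\infty y^{2m}\nu(dy)<\infty$, so for $m=1$ the integral $\int_0^\infty y^4\nu(dy)$ may diverge. The nonlocal term of the perturbed test function would then be $+\infty$ and your contradiction hypothesis collapses. The square costs only $O(\eta)$, using $\int_0^\infty y^2\nu(dy)$, and yields $\kappa=\eta r^2$.
\item The uniformity in $(p,q)$ is not delicate. For each fixed $(p,q)$, the expression with the suprema removed is bounded below by $\rho\varphi-\mathcal{L}\varphi-\mathcal{I}\varphi$, so continuity of the latter suffices. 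That continuity holds because $z\mapsto\sup_{p\ge0}\mu^{\rm dp}(p)(z-p)$ is $1$-Lipschitz, since $0\le\mu^{\rm dp}\le1$. Your justification that ``$\sup_p$ is attained on $[0,b]$'' is vacuous when $b=\infty$, e.g.\ for the exponential law used in Section \ref{sec:example}.
\item When you cut off $\varphi$ to polynomial growth, take $\tilde\varphi\ge\varphi$ in the supersolution case and $\tilde\varphi\le\varphi$ in the subsolution case. Monotonicity of $\mathcal{I}$ then transfers the inequality back to the original test function.
\end{itemize}
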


\begin{proof}
The proof is divided into two steps. 

{\it Step 1: viscosity supersolution.} By using Lemma \ref{growth}, the value function $x\to V(x)$ satisfies the polynomial growth condition, and hence $x\to V(x)$ is locally bounded on $\R$. Thus, we can define its l.s.c. envelope given by $x\to V_*(x):=\liminf_{\varepsilon\to 0}\{V(y): |y-x|\leq \varepsilon\}$. Let $\bar{x}\in\R$ and $\varphi\in  C^2(\R)$ such that $0=(V_*-\varphi)(\bar{x})=\min_{x\in \R }(V_*-\varphi)(x)$. Without loss of generality,  we can assume that the above inferior is strict, i.e.,$(V_*-\varphi)(x)>0$ for $x\neq \bar{x}$, otherwise, we can replace $\varphi$ by  $\varphi(x)+\varepsilon|x-\bar{x}|^2$ with sufficiently small $\varepsilon>0$. By definition of $V_*(\bar{x})$, there exists a sequence $(x_n)_{n\geq1}\subset\R$ such that $x_n\to \bar{x}$ and $V(x_n)\to V_*(\bar{x})$ as $n\to\infty$. From the continuity of $x\to\varphi(x)$ (since $\varphi\in C^2(\R)$), we can deduce that $\gamma_n:=V(x_n)-\varphi(x_n)\to 0$ as $n\to\infty$. Choose a pair of constant control strategy $(p,q)\equiv({\rm p},{\rm q})$ with $({\rm p},{\rm q})\in \R_+^2$. Denote by $X^{(n)}=(X_t^{(n)})_{t\geq 0}$ the corresponding (controlled) state process satisfying \eqref{state2} under this constant control strategy with $X_0^{(n)}=x_n$. Let $(h_n)_{n\geq 1}$ be a positive sequence such that $h_n\to 0$ and $\frac{\gamma_n}{h_n}\to 0$ as $n\to \infty$. Introduce a sequence of $\mathbb{F}$-stopping time $\tau_n:=\inf\{t\geq 0;~|X_t^{(n)}-x_n|\geq \eta\}\wedge h_n$, where $\eta>0$ is a fixed constant. Applying the DPP from $0$ to $\tau_n$, we have
\begin{align*}
V(x_n)\geq \mathbb{E}\left[ \int_0^{\tau_n}e^{-\rho t}F(X_t^{(n)}, p, q)dt+e^{-\rho \tau_n}V(X_{\tau_n-}^{(n)})\right].    
\end{align*}
Note that $V\geq V_*\geq \varphi$ on $\R$. Then, we obtain
\begin{align*}
\varphi(x_n)+\gamma_n\geq \mathbb{E}\left[ \int_0^{\tau_n}e^{-\rho t}F(X_t^{(n)}, p, q)dt+e^{-\rho \tau_n}\varphi(X_{\tau_n-}^{(n)})\right].    
\end{align*}
Applying It\^o's formula to $e^{-\rho t}\varphi(X_t^{(n)})$,  we have 
\begin{align}\label{eq:step1}
0\leq\frac{\gamma_n}{h_n}+\frac{1}{h_n}\mathbb{E}\left[\int_0^{\tau_n}e^{-\rho\tau_n}f_{\varphi}(X_t^{(n)}, p, q)dt\right],    
\end{align}
where, the function $f_{\varphi}:\R\times \R_+^2\to\R$ is defined by 
\begin{align*}
f_{\varphi}(x, p,q)&:=	\rho \varphi(x)-\frac{1}{2}\sigma^2\varphi''(x)
		+\delta x \varphi'(x)+ \Phi(x)-\lambda^{\rm c} (x) \left(1-\mu^{\rm c}\left(\frac{q}{g(x)}\right)\right) q  \\
		&\quad- \lambda^{\rm dp}(x)\mu^{\rm dp}(p) \int_0^\infty ( \varphi(x+y) - \varphi(x)-py ) \nu(dy).
\end{align*}
Note that, a.s.
\begin{align*}
\sup_{t\in [0,h_n]}\left|X_t^{(n)}-x_n\right|^2&\leq C\Bigg\{(1+x_n^2)h_n+\int_0^{h_n}|X_s^{(n)}-x_n|^2ds+\sup_{t\in [0,h_n]}|\sigma W_t|^2\\&\quad+\sup_{t\in [0,h_n]}\left|\int_0^t\int_{\R_+\times\R_+}y{\bf1}_{\{z\leq \lambda^{\rm dp}(X_{s-}^{(n)}\mu^{\rm dp}(p))\}}\widetilde{\mathcal{N}}(ds,dy,dz)\right|^2\Bigg\}.
\end{align*}
where $\tilde{\cal N}(ds,dy,dz):={\cal N}(ds,dy,dz)-\nu(dy)dzds$ is the compensated Poisson random measure. It follows from the Burkholder-Davis-Gundy (BDG) inequality that
\begin{align*}
\mathbb{E}\left[\sup_{t\in [0,h_n]}|X_t^{(n)}-x_n|^2\right]&\leq C\left\{(1+x_n^2)h_n+\int_0^{h_n}\mathbb{E}\left[\sup_{s\in [0,t]}\left|X_s^{(n)}-x_n\right|^2\right]dt\right\}.
\end{align*}
By Chebyshev's inequality and Gronwall's lemma, we have 
\begin{align}\label{cheb}
\mathbb{P}\left(\sup_{t\in [0, h_n]}\left|X_t^{(n)}-x_n\right|\geq \eta\right)&\leq \frac{\mathbb{E}\left[\sup_{t\in [0, h_n]}\left|X_t^{(n)}-x_n\right|^2\right]}{\eta^2}\nonumber\\
&\leq \frac{1}{\eta^2}C(1+x_n^2)e^{C h_n}h_n\to 0,~~ {\rm as}~n\to\infty.
\end{align}
As in \cite{BH25}, we define the event
$\Omega_{n,\epsilon}:=\{ \sup_{ t \in [0, h_n]} |X_t^{(n)} - x_n| \leq \epsilon\}$ for $\epsilon > 0$, then $\mathbb{P}(\Omega_{n,\epsilon}^c)\to0$ as $n \to \infty$. Note that, we have
\begin{align*}
\frac{1}{h_n} \mathbb{E} \left[ \int_0^{\tau_n} e^{-\rho \tau_n} f_{\varphi}(X_t^{(n)}, p, q) \, dt \right] 
&= \frac{1}{h_n} \mathbb{E} \left[ \int_0^{h_n} e^{-\rho h_n} f_{\varphi}(X_t^{(n)}, p, q) \, dt \cdot \mathbf{1}_{\Omega_{n,\epsilon}} \right]\\
&\quad + \frac{1}{h_n} \mathbb{E} \left[ \int_0^{\tau_n} e^{-\rho \tau_n} f_{\varphi}(X_t^{(n)}, p, q) \, dt \cdot \mathbf{1}_{\Omega_{n,\epsilon}^c} \right]. 
\end{align*}
On $ \Omega_{n,\epsilon} $, we have $ \tau_n = h_n $ and  $ f_{\varphi} $ is bounded on this compact set. Thus,  on $\Omega_{n,\epsilon}$, by using the mean value theorem, one has $\frac{1}{h_n} \int_0^{h_n} e^{-\rho h_n} f_{\varphi}(X_t^{(n)}, p, q) dt\to f_{\varphi}(\bar{x}, p, q)$ for $(p,q)\in \R_+^2$  a.s. as $n\to\infty$. On $\Omega_{n,\epsilon}^c $, we have from the definition of $\tau_n$ that $f_{\varphi}$ is also bounded.
As $\epsilon$ converges to 0 and $n$ converges to infinity on both sides of \eqref{eq:step1}, it holds that $f_{\varphi}(\bar{x}, p, q)\geq 0$ for all $(p,q)\in \R_+^2$. Hence, we have $\rho V(\bar{x})-(\mathcal{L}+\mathcal{I})\varphi(\bar{x})\geq 0$ on $\R$. 

{\it Step 2: viscosity subsolution.} To do it, 
we define $x\mapsto V^*(x):=\limsup_{\varepsilon\to 0}\{V(y): |y-x|\leq \varepsilon\}$ as the u.s.c. envelope of value function $x\mapsto V(x)$. Let $\bar{x}\in \R $, and $\varphi\in  C^2(\R)$ such that $0=(V^*-\varphi)(\bar{x})=\max_{x\in \R }(V^*-\varphi)(x)$. Similarly,  we can suppose that the previous maximum is strict. We will assume that $\rho V(\bar{x})-(\mathcal{L}+\mathcal{I})\varphi(\bar{x})> 0$ and verify through contradiction. By the continuity of $\varphi$, there exist $\eta>0$ and $\varepsilon>0$ such that $\rho \varphi (x)-(\mathcal{L}+\mathcal{I})\varphi(x)\geq \varepsilon$ for all $x\in B(\bar{x},\eta)$. There exists a sequence $(x_n)_{n\geq 1}$ such that $x_n\to \bar{x}$ and $V(x_n)\to V^*(\bar{x})$ as $n\to\infty$. By the continuity of $\varphi$,  we have $\gamma_n:=V(x_n)-\varphi(x_n)\to 0$ as $n\to\infty$. For a positive sequence $(h_n)_{n\geq 1}$ satisfying $h_n\to 0$ and $\frac{\gamma_n}{h_n}\to 0$ as $n\to \infty$, there exists a pair of $\frac{\varepsilon h_n}{2}$-optimal control strategies $(\hat{p}, \hat{q})\in {\cal U}$ such that 
\begin{align*}
V(x_n)-\frac{\varepsilon h_n}{2}\leq \mathbb{E}\left[\int_0^{\theta_n}e^{-\rho t}f(X_t^{(n)}, \hat{p}_t,\hat{q}_t)dt+e^{-\rho \theta_n}V(X_{\theta_n-}^{(n)})\right].    
\end{align*}
Then, by $V\leq \varphi$, it holds that
\begin{align*}
 \varphi(x_n)+\gamma_n-\frac{\varepsilon h_n}{2}\leq \mathbb{E}\left[\int_0^{\theta_n}e^{-\rho t}f(X_t^{(n)}, \hat{p}_t,\hat{q}_t)dt+e^{-\rho \theta_n}\varphi(X_{\theta_n-}^{(n)} )\right]   
\end{align*}
in which $\theta_n:=\tau_n'\wedge h_n$ and $\tau_n':=\inf\{t\geq 0;~|X_t^{(n)}-x_n|\geq \eta'\}$. By applying It\^o's rule to $e^{-\rho t}\varphi(X_t^{(n)})$ again,  we have 
\begin{align}\label{eq:visco}
\frac{\gamma_n}{h_n}-\frac{\varepsilon}{2}+\frac{1}{h_n}\mathbb{E}\left[\int_0^{\theta_n}e^{-\rho\theta_n}f(X_t^{(n)}, \hat{p}_t, \hat{q}_t)dt\right]\leq 0.
\end{align}
Furthermore, note that, a.s., 
\begin{align*}
f(X_t^{(n)}, \hat{p}_t, \hat{q}_t)\geq \rho \varphi(X_t^{(n)})-(\mathcal{L}+\mathcal{I})\varphi(X_t^{(n)})\geq\varepsilon,\quad \text{on}~t\in[0,\theta_n]. 
\end{align*}
We deduce from \eqref{eq:visco} that $\frac{\gamma_n}{h_n}-\varepsilon\left(\frac{1}{2}-\frac{1}{h_n}\mathbb{E}[\theta_n]\right)\leq 0$ for all $n\geq1$. Then, it follows from \eqref{cheb} that
\begin{align*}
 \mathbb{P}(\tau_n'\leq h_n)\leq \mathbb{P}\left(\sup_{t\in [0, h_n]}|X_t^{(n)}-x_n|\geq \eta\right)\to 0,\quad n\to\infty.   
\end{align*}
Moreover, since $\mathbb{P}(\tau_n'>h_n)\leq \frac{1}{h_n}\mathbb{E}[\theta_n]\leq 1$ for all $n\geq1$, this implies that $\frac{1}{h_n}\mathbb{E}[\theta_n]$ tends to 1 as $n\to \infty$. We thus get the desired contradiction.
\end{proof}

To establish the uniqueness result, we next prove the comparison result for viscosity solutions to \eqref{eq:HJB}.
\begin{lemma}\label{comparison1}
Let $u(x)$ and $v(x)$ for $x\in\R$ be a viscosity subsolution and a viscosity supersolution to the HJB equation \eqref{eq:HJB}, both satisfying the polynomial growth condition. Then, we have $u\leq v$ on $\R$.
\end{lemma}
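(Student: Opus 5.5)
We argue by contradiction using the doubling-of-variables technique for nonlocal equations, as in \cite{BI08}, combined with a penalization at infinity to handle polynomial growth.

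\textbf{Step 1: growth penalization.} Since $u,v$ grow at most like $|x|^m$, we introduce a Lyapunov-type function $\psi(x):=(1+|x|^2)^{\frac{m'}{2}}$ with $m'>m$ chosen close enough to $m$ that $\rho>\rho_0(m')$. Here $\rho_0(m')$ denotes the constant of Assumption \ref{ASS}-(iv) with $m$ replaced by $m'$. This is possible since the inequality in (iv) is strict. If $\nu$ lacks moments beyond $2m$, we use $m'=m$ together with a multiplicative weight instead.

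The computation in the proof of Lemma \ref{growth} then gives
\begin{align*}
\rho\psi-\tfrac12\sigma^2\psi''+\delta x\psi'-\|\lambda^{\rm dp}\|_\infty\int_0^\infty(\psi(x+y)-\psi(x))\nu(dy)\geq c\,\psi-C.
\end{align*}
Adding a constant, $\psi$ becomes a strict supersolution of the linearized equation. We then set $v_\beta:=v+\beta\psi$ for small $\beta>0$. Using the structure $\sup_p\{\mu^{\rm dp}(p)(\int(\psi_1(x+y)-\psi_1(x))\nu(dy)-p\bar y)\}$, which is monotone and sub-additive in $\psi_1$, $v_\beta$ is a strict viscosity supersolution. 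Moreover $u-v_\beta\to-\infty$ as $|x|\to\infty$. It therefore suffices to show $u\le v_\beta$ and then let $\beta\to0$.

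\textbf{Step 2: doubling variables.} Suppose $M:=\sup(u-v_\beta)>0$, which is attained on a compact set. We consider
\begin{align*}
\Phi_\varepsilon(x,y):=u(x)-v_\beta(y)-\frac{|x-y|^2}{2\varepsilon}.
\end{align*}
Its maxima $(x_\varepsilon,y_\varepsilon)$ stay in a compact set, satisfy $|x_\varepsilon-y_\varepsilon|^2/\varepsilon\to0$, and converge to a maximizer $\hat x$ of $u-v_\beta$. We invoke the nonlocal Jensen–Ishii lemma of \cite{BI08}. This yields $a\le b$ with $p_\varepsilon=(x_\varepsilon-y_\varepsilon)/\varepsilon$, together with sub- and supersolution inequalities in which the integral operator is evaluated on $u$ and $v_\beta$ themselves (away from the test function). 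This evaluation is legitimate because the jump measure $\nu$ is finite, so $\mathcal I$ is of order zero. We then subtract the two inequalities and treat each term separately.

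The diffusion term gives $\tfrac12\sigma^2(a-b)\le0$. The drift term gives $\delta(x_\varepsilon-y_\varepsilon)p_\varepsilon=\delta|x_\varepsilon-y_\varepsilon|^2/\varepsilon\to0$. The terms $\Phi$ and $\lambda^{\rm c}(x)g(x)h_1(\hat q)$ are continuous, hence their differences vanish. For the nonlocal part we use $|\sup_pA-\sup_pB|\le\sup_p|A-B|$, $\mu^{\rm dp}\le1$, and the Lipschitz continuity of $\lambda^{\rm dp}$. Then
\begin{align*}
\mathcal I u(x_\varepsilon)-\mathcal I v_\beta(y_\varepsilon)\le\lambda^{\rm dp}(x_\varepsilon)\Big(\int_0^\infty[u(x_\varepsilon+z)-v_\beta(y_\varepsilon+z)]^+\nu(dz)-(u(x_\varepsilon)-v_\beta(y_\varepsilon))\Big)^+ +o(1).
\end{align*}
Maximality of $(x_\varepsilon,y_\varepsilon)$ gives $u(x_\varepsilon+z)-v_\beta(y_\varepsilon+z)\le u(x_\varepsilon)-v_\beta(y_\varepsilon)$, so the bracket is $\le0$.

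Combining these, we obtain $\rho(u(x_\varepsilon)-v_\beta(y_\varepsilon))\le -c_\beta+o(1)$, where $c_\beta$ is the strictness margin from Step 1. Letting $\varepsilon\to0$ yields $\rho M\le -c_\beta<0$, a contradiction.

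\textbf{Main obstacle.} We expect Step 1 to be the main difficulty. The Hamiltonian is not linear in the nonlocal argument because of the $\sup_p$ and the $-p\bar y$ term, and the perturbation $\beta\psi$ enters inside this supremum. We need to check that $\sup_p\mu^{\rm dp}(p)(A+B-p\bar y)\le \sup_p\mu^{\rm dp}(p)(A-p\bar y)+B^+$, together with the growth estimate on $\psi$ under the condition on $\rho$, so that strictness survives. The upper semicontinuity and growth needed to evaluate $\mathcal I$ on the non-smooth functions are routine given the polynomial growth of $u$ and $v$ and the $2m$-th moment of $\nu$.
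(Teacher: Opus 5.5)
Your route differs from the paper's, but one step fails as written. Once that step is repaired, the proof goes through.

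\textbf{Comparison with the paper.} The paper does not build a strict supersolution. It puts the growth penalty $\epsilon(|x|^{2m}+|\tilde x|^{2m})$ directly into the doubling function $\psi_{n,\epsilon}$ and applies Crandall--Ishii. It then has to control the extra second-order, drift and nonlocal contributions of that penalty before letting $n\to\infty$ and then $\epsilon\to0$. The nonlocal ones are of order $\epsilon|x|^{2m-1}$; they become $O(\epsilon)$ only after being absorbed by the favourable drift terms $-2m\delta\epsilon(|x|^{2m}+|\tilde x|^{2m})$, which the paper leaves implicit.

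You instead handle all growth issues once, through a Lyapunov computation for $\tilde\psi:=\psi+K$. After that the doubling is translation invariant. The nonlocal terms are disposed of exactly via $\Phi_\varepsilon(x_\varepsilon+z,y_\varepsilon+z)\le\Phi_\varepsilon(x_\varepsilon,y_\varepsilon)$, and the contradiction comes from the margin $\beta c_0$ rather than from tracking $\epsilon$-terms. That is cleaner. What the paper's penalty on \emph{both} variables buys is compactness of the maximizers, and that is exactly what your Step 2 lacks.

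\textbf{The failing step.} You perturb only $v$, so $\Phi_\varepsilon(x,0)=u(x)-v_\beta(0)-|x|^2/(2\varepsilon)$. Assumption \ref{ASS}(ii) allows any $m\ge1$, and a subsolution may genuinely grow like $+|x|^m$. For example, take $u=V$ with $\lambda^{\rm c}(x)=1+|x|^3$, $\Phi\equiv0$, $g\equiv1$ and $m=3$; using the control $(0,\hat q)$ gives $V(x)\ge c|x|^3$. For $m>2$ the function $\Phi_\varepsilon(x,0)$ then tends to $+\infty$. Hence $\sup\Phi_\varepsilon=+\infty$, and your claim that the maximizers $(x_\varepsilon,y_\varepsilon)$ exist in a compact set is false.

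You also cannot simply restrict the supremum to a ball without reworking the nonlocal step. That step uses global maximality at the shifted points $(x_\varepsilon+z,y_\varepsilon+z)$ for all $z\ge0$.

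\textbf{The repair.} Perturb $u$ symmetrically. Set $u_\beta:=u-\beta\tilde\psi$; it is a strict subsolution with margin $\beta c_0$ by the same computation. Indeed, if $\varphi$ touches $u_\beta$ from above at $\bar x$, then $\varphi+\beta\tilde\psi$ touches $u$ there. Moreover $\mathcal{I}(u_\beta+\beta\tilde\psi)-\mathcal{I}u_\beta\le\beta\lambda^{\rm dp}B^+$, where $B(x):=\int_0^\infty(\psi(x+y)-\psi(x))\,\nu(dy)$.

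Then $u_\beta(x)-v_\beta(y)\le C(1+|x|^m+|y|^m)-\beta(|x|^{m'}+|y|^{m'})$, which tends to $-\infty$ as $|x|+|y|\to\infty$. So maximizers of $u_\beta(x)-v_\beta(y)-|x-y|^2/(2\varepsilon)$ exist and are bounded uniformly in $\varepsilon$. Step 2 then goes through verbatim with margin $2\beta c_0$, giving $u\le v+2\beta\tilde\psi$ for every $\beta>0$.

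\textbf{Minor points.}
\begin{enumerate}
\item Since $\int_0^\infty y^{2m}\nu(dy)<\infty$, every $m'\in(m,2m]$ is admissible, so the multiplicative-weight fallback is never needed.
\item You do not need $\rho>\rho_0(m')$ at all. We have $\delta x\psi'(x)\ge0$, while $\psi''$ and $B^+$ are $O(1+|x|^{m'-1})$. Hence $\rho\psi-\frac12\sigma^2\psi''+\delta x\psi'-\|\lambda^{\rm dp}\|_\infty B^+\ge\frac{\rho}{2}\psi-C$ for every $\rho>0$.
\item That is also the form your Lyapunov display should take: it must carry $B^+$, not $B$. For $x<0$ the increment $B$ can be negative. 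In that case your display, with $-\|\lambda^{\rm dp}\|_\infty B>0$, does not yield the lower bound for $\rho\tilde\psi-\frac12\sigma^2\tilde\psi''+\delta x\tilde\psi'-\lambda^{\rm dp}B^+$ that strictness requires. Your ``main obstacle'' paragraph already uses the correct form.
\end{enumerate}
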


\begin{proof}
We show it by contradiction, and assume that there exists $\bar{x}\in \R$ such that $u(\bar{x})-v(\bar{x})\geq 2\kappa$ for a positive constant $\kappa$. Define $\Psi_{n, \epsilon}(x,\tilde{x}):=u(x)-v(\tilde{x})-\psi_{n, \epsilon}(x,\tilde{x})$ for any $(x, \tilde{x})\in\R^2$. Here,  the function $\psi_{n, \epsilon}(x,\tilde{x}):=n|x-\tilde{x}|^2+\epsilon(|x|^{2m}+|\tilde{x}|^{2m})$ with $(n, \epsilon)\in \R_+\times (0,1]$ and $m$ is given in Assumption \ref{ASS}. Furthermore, define the constant $M_{n, \epsilon}:=\sup_{(x,\tilde{x})\in \R^2}\Psi_{n, \epsilon}(x,\tilde{x})$. Note that $x\to u(x)$ and $x\to v(x)$ satisfy the polynomial growth condition. Then, by the upper semi-continuity of $(x,\tilde{x})\to\Psi_{n, \epsilon}(x,\tilde{x})$, we have $M_{n, \epsilon}<\infty$, and there exists $(x_{n,\epsilon},\tilde{x}_{n,\epsilon})\in \R^2$ such that $M_{n, \epsilon}=\Psi_{n,\epsilon}(x_{n,\epsilon},\tilde{x}_{n,\epsilon})$. This yields that $ M_{n,\epsilon}\geq u(\bar{x})-v(\bar{x})-\psi_{n,\epsilon}(\bar{x}, \bar{x})\geq 2\kappa -2\epsilon |\bar{x}|^{2m}$. This implies that there exists $\epsilon_0\in (0, \frac{\kappa}{2|\bar{x}|^{2m}})$ such that $M_{n,\epsilon}>\kappa$ for all $\epsilon \in (0, \epsilon_0]$. Using the fact $\Psi_{n,\epsilon}(0,0)\leq \Psi_{n,\epsilon}(x_{n,\epsilon},\tilde{x}_{n,\epsilon})$, together with the polynomial growth of $x\to u(x)$ and $x\to v(x)$, there exists a constant $C>0$ independent of $n,\epsilon$ such that
\begin{align*}
n|x_{n, \epsilon}-\tilde{x}_{n,\epsilon}|^2+\epsilon(|x_{n,\epsilon}|^{2m}+|\tilde{x}_{n,\epsilon}|^{2m})&\leq u(0)-v(0)+u(x_{n,\epsilon})-v(\tilde{x}_{n,\epsilon})\nonumber\\
&\leq u(0)-v(0)+2C(1+|x_{n,\epsilon}|^m+|\tilde{x}_{n,\epsilon}|^m).
\end{align*}
As the term in the left hand side will grow faster than the right when $x_{n ,\epsilon}$ and $\tilde{x}_{n ,\epsilon}$ are sufficiently large, we can find a positive constant $C_{\epsilon}>0$ indepedent of $n$ such that $|x_{n,\epsilon}|\vee |\tilde{x}_{n,\epsilon}|\leq C_{\epsilon}$ for all $n\geq 1$. By this point, there exists a subsequence, still denoted by $(x_{n,\epsilon},\tilde{x}_{n,\epsilon})_{n\geq 1}$, which converges to $(x_{\epsilon}, \tilde{x}_{\epsilon})\in \R^2$ as $n\to\infty$. Hence, $n|x_{n,\epsilon}-\tilde{x}_{n,\epsilon}|^2\leq 2C(1+2|C_{\epsilon}|^{2m})+u(0)-v(0)$. Consequently, we can conclude that $x_{n,\epsilon}-\tilde{x}_{n,\epsilon}\to 0$ as $n\to \infty$, and hence $x_{\epsilon}=\tilde{x}_{\epsilon}$. On the other hand, it follows from the fact $\Psi(x_{\epsilon},\tilde{x}_{\epsilon})\leq \Psi(x_{n,\epsilon},\tilde{x}_{n,\epsilon})$ for all $n\geq 1$ and $\epsilon>0$, we obtain
\begin{align*}
n|x_{n,\epsilon}-\tilde{x}_{n,\epsilon}|^2&\leq u(x_{n,\epsilon})-u(x_{\epsilon})+v(\tilde{x}_{n,\epsilon})-v(\tilde{x}_{\epsilon})+\epsilon (|x_{\epsilon}|^{2m}+|\tilde{x}_{\epsilon}|^{2m})\nonumber\\
&\quad-\epsilon(|x_{n,\epsilon}|^{2m}+|\tilde{x}_{n,\epsilon}|^{2m}).  
\end{align*}
By the semi-continuity of $x\to u(x)$ and $x\to v(x)$, one has $n|x_{n,\epsilon}-\tilde{x}_{n,\epsilon}|^2\to 0$ as $n\to\infty$.
As a result, by the construction of definition of $(x_{n,\epsilon},\tilde{x}_{n,\epsilon})$, we have $x_{n,\epsilon}$ is a local maximum of $x\to u(x)-\psi_{n, \epsilon}(x, \tilde{x}_{n,\epsilon})$ and $\tilde{x}_{n,\epsilon}$ is a local maximum of $\tilde{x}\to v(\tilde{x})+\psi_{n , \epsilon}(x_{n,\epsilon}, \tilde{x})$. By the Crandall–Ishii lemma (cf. \cite{CI92} and \cite{P09}), we have, for any $\tilde{\eta}>0$, there exist constants $a_{n,\epsilon}, b_{n,\epsilon}\in\R$ such that
\begin{align}\label{Ishii}
\begin{cases}
 \displaystyle \rho u(x_{n,\epsilon})-\mathcal{L}^{a_{n,\epsilon}}\psi_{n, \epsilon}(x_{n, \epsilon}, \tilde{x}_{n, \epsilon})-\mathcal{I}u(x_{n, \epsilon})\leq 0,\\[0.4em]
 \displaystyle \rho v(\tilde{x}_{n, \epsilon})-\mathcal{L}^{b_{n,\epsilon}}(-\psi_{n, \epsilon})(x_{n, \epsilon},\tilde{x}_{n, \epsilon})-\mathcal{I}v(\tilde{x}_{n, \epsilon})\geq 0,
\end{cases}
\end{align}
where the operator $\mathcal{L}^i$ for $i\in \{a_{n,\epsilon}, b_{n,\epsilon}\}$ acting on $\phi\in C^1(\R)$ is defined by
\begin{align*}
\mathcal{L}^i\phi(x):=\frac{1}{2}\sigma^2i-\delta x\phi'(x)-\Phi(x)+\lambda^{\rm c}(x)H^{({\rm c})}(x;p),~~x\in\R,    
\end{align*}
and $(a_{n,\epsilon}, b_{n,\epsilon})$ satisfy that
\begin{align*}
\begin{pmatrix}
a_{n,\epsilon} & 0\\ 0 &-b_{n,\epsilon}
\end{pmatrix}\leq D_{x,\tilde{x}}^2\psi_{n,\epsilon}(x_{n, \epsilon}, \tilde{x}_{n, \epsilon})+\tilde{\eta} (D_{x,\tilde{x}}^2\psi_{n,\epsilon}(x_{n, \epsilon}, \tilde{x}_{n, \epsilon}))^2.
\end{align*}
Here, $D_{x,\tilde{x}}^2\psi_{n,\epsilon}(x_{n, \epsilon}, \tilde{x}_{n, \epsilon})$ denotes the Hessian matrix of $\psi_{n, \epsilon}$ at the point $(x_{n,\epsilon}, \tilde{x}_{n, \epsilon})$ (and its square is denoted by $(D_{x,\tilde{x}}^2\psi_{n,\epsilon}(x_{n, \epsilon}, \tilde{x}_{n, \epsilon}))^2$), given by
\begin{align*}
D_{x,\tilde{x}}^2\psi_{n,\epsilon}(x_{n, \epsilon}, \tilde{x}_{n, \epsilon})&=\begin{pmatrix}
2n+\epsilon d_{n,\epsilon} &-2n\\
-2n&2n+\epsilon \tilde{d}_{n,\epsilon} 
\end{pmatrix}
\end{align*}
with $d_{n,\epsilon}:=2m(2m-1)|x_{n, \epsilon}|^{2m-2}$ and $\tilde{d}_{n,\epsilon}:=2m(2m-1)|\tilde{x}_{n, \epsilon}|^{2m-2}$.
Taking $\tilde{\eta}=\frac{1}{4n}$. Then, we have
\begin{align*}
& D_{x,\tilde{x}}^2\psi_{n,\epsilon}(x_{n, \epsilon}, \tilde{x}_{n, \epsilon})+\tilde{\eta} (D_{x,\tilde{x}}^2\psi_{n,\epsilon}(x_{n, \epsilon}, \tilde{x}_{n, \epsilon}))^2=4n\begin{pmatrix}
	1&-1\\-1&1
\end{pmatrix}\\
&\qquad\qquad\qquad+\begin{pmatrix}
2\epsilon d_{n,\epsilon} +\frac{(\epsilon d_{n,\epsilon} )^2}{4n} &-\frac{\epsilon }{2}(d_{n,\epsilon} +\tilde{d}_{n,\epsilon} )\\
-\frac{\epsilon }{2}(d_{n,\epsilon} +\tilde{d}_{n,\epsilon} )&2\epsilon \tilde{d}_{n,\epsilon} +\frac{(\epsilon \tilde{d}_{n,\epsilon} )^2}{4n}
\end{pmatrix}.
\end{align*}
Hence, it holds that
\begin{align*}
a_{n,\epsilon}-b_{n,\epsilon}=(1,1)\begin{pmatrix}
	a_{n,\epsilon}&0\\0&-b_{n,\epsilon}
\end{pmatrix}\begin{pmatrix}
1\\1
\end{pmatrix}\leq \epsilon (d_{n,\epsilon} +\tilde{d}_{n,\epsilon} )+\frac{(\epsilon d_{n,\epsilon} )^2+(\epsilon \tilde{d}_{n,\epsilon} )^2}{4n}.    
\end{align*}
By subtracting the two inequalities in \eqref{Ishii}, we obtain 
\begin{align*}
&\rho (u(x_{n, \epsilon})-v(\tilde{x}_{n, \epsilon}))\leq \mathcal{L}^{a_{n,\epsilon}}\psi_{n, \epsilon}(x_{n, \epsilon}, \tilde{x}_{n, \epsilon})-\mathcal{L}^{b_{n,\epsilon}}(-\psi_{n, \epsilon})(x_{n, \epsilon},\tilde{x}_{n, \epsilon})+\mathcal{I}u(x_{n, \epsilon})-\mathcal{I}v(\tilde{x}_{n, \epsilon})\\
&\quad=\frac{1}{2}\sigma^2a_{n,\epsilon}-2nx_{n, \epsilon}\delta(x_{n, \epsilon}-\tilde{x}_{n, \epsilon})-2m\delta\epsilon|x_{n, \epsilon}|^{2m}-\Phi(x_{n, \epsilon})+\lambda^{\rm c}(x_{n, \epsilon})H^{({\rm c})}\left(x_{n, \epsilon}; q^*\right)\\
&\qquad- \frac{1}{2}\sigma^2b_{n,\epsilon}+2n\tilde{x}_{n, \epsilon}\delta (x_{n, \epsilon}-\tilde{x}_{n, \epsilon})+2m\delta\epsilon|\tilde{x}_{n, \epsilon}|^{2m}+\Phi(\tilde{x}_{n, \epsilon})-\lambda^{\rm c}(\tilde{x}_{n, \epsilon})H^{({\rm c})}\left(\tilde{x}_{n, \epsilon};q^*\right)\\
&\qquad+\mathcal{I}u(x_{n, \epsilon})-\mathcal{I}v(\tilde{x}_{n, \epsilon}),
\end{align*}
and there exists a constant $C>0$ such that
{\small\begin{align*}
&\mathcal{I}u(x_{n, \epsilon})-\mathcal{I}v(\tilde{x}_{n, \epsilon})=\lambda^{\rm dp}(x_{n, \epsilon})\sup_{p\in \R_+}\left\{\mu^{\rm dp}(p)\int_0^{\infty}(u(x_{n, \epsilon}+y)-u(x_{n, \epsilon})-py)\nu(dy)\right\}\\
&\qquad\qquad-\lambda^{\rm dp}(\tilde{x}_{n, \epsilon})\sup_{p\in \R_+}\left\{\mu^{\rm dp}(p)\int_0^{\infty}(v(\tilde{x}_{n, \epsilon}+y)-v(\tilde{x}_{n, \epsilon})-py)\nu(dy)\right\}\\
&\leq |\lambda^{\rm dp}(x_{n, \epsilon})-\lambda^{\rm dp}(\tilde{x}_{n, \epsilon})|\left|H_u^{\rm (dp)}(x_{n, \epsilon}; p^*)\right|+\lambda^{\rm dp}(\tilde{x}_{n, \epsilon})\bigg|\int_0^{\infty}(u(x_{n, \epsilon}+y)-v(\tilde{x}_{n, \epsilon}+y))\nu(dy)\nonumber\\
&\qquad-(u(x_{n, \epsilon})-v(\tilde{x}_{n, \epsilon}))\bigg|\\
&\leq C |x_{n, \epsilon}-\tilde{x}_{n, \epsilon}|+\|\lambda^{\rm dp}\|_{\infty}\left( \int_0^{\infty}(M_{n, \epsilon}+\psi_{n, \epsilon}(x_{n,\epsilon}+y, \tilde{x}_{n,\epsilon}+y))\nu(dy)-M_{n, \epsilon}-\psi_{n, \epsilon}(x_{n,\epsilon}, \tilde{x}_{n,\epsilon})\right)\\
&\leq C |x_{n, \epsilon}-\tilde{x}_{n, \epsilon}|+\|\lambda^{\rm dp}\|_{\infty}\left(\epsilon \int_0^{\infty}(|x_{n, \epsilon}+y|^{2m}+|\tilde{x}_{n, \epsilon}+y|^{2m})\nu(dy)-\epsilon  (|x_{n, \epsilon}|^{2m}+|\tilde{x}_{n, \epsilon}|^{2m})\right)\nonumber\\
&\leq C\left(|x_{n, \epsilon}-\tilde{x}_{n, \epsilon}|+\epsilon\right).
\end{align*}}
Letting $n\to \infty$, we obtain from the continuity of $\Phi$ and $H^{(\rm c)}$ and the fact $x_{n, \epsilon}, \tilde{x}_{n, \epsilon} \to x_{\epsilon }$  that $ \rho (u(\bar{x})-v(\bar{x}))-2\epsilon|\bar{x}|^{2m}\leq \rho (u(x_{\epsilon})-v(x_{\epsilon}) )\leq C\epsilon $.  Make $\epsilon \to 0$, we conclude that $0<2\rho\kappa\leq \rho (u(\bar{x})-v(\bar{x}))\leq 0$ , which yields the contradiction. Thus, we complete the proof of the lemma.
\end{proof}

Then,  we have the following main result of this section. 
\begin{theorem}\label{unique}
Let Assumption \ref{ASS} hold. The value function $x\to V(x)$ defined by \eqref{valuefunction2} is the unique viscosity solution to the HJB equation \eqref{eq:HJB} satisfying the polynomial growth condition. Furthermore, the value function $x\to V(x)$ is continuous.
\end{theorem}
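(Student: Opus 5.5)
The proof combines Lemma \ref{growth}, Lemma \ref{existence} and Lemma \ref{comparison1} in the standard way. I will use the envelopes $V^*$ and $V_*$ of $V$, defined as in the proof of Lemma \ref{existence}.

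\emph{Existence and continuity.} By Lemma \ref{existence}, $V$ is a viscosity solution of \eqref{eq:HJB}. By Definition \ref{def1}(iii), this means that $V^*$ is a viscosity subsolution and $V_*$ is a viscosity supersolution. Lemma \ref{growth} gives $|V(x)|\le M(1+|x|^m)$, and the same bound passes to both envelopes, since they are limsup/liminf of $V$ over shrinking balls and $1+|x|^m$ is continuous. So $V^*$ and $V_*$ both have polynomial growth. Lemma \ref{comparison1}, applied with $u=V^*$ and $v=V_*$, then gives $V^*\le V_*$ on $\R$. The reverse inequality $V_*\le V\le V^*$ holds by construction. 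Hence $V_*=V=V^*$, so $V$ is both u.s.c.\ and l.s.c., that is, continuous.

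\emph{Uniqueness.} Let $\tilde V$ be any viscosity solution with polynomial growth. Its envelopes $\tilde V^*$ and $\tilde V_*$ are respectively a subsolution and a supersolution, again with polynomial growth. Applying Lemma \ref{comparison1} twice gives two inequalities:
\begin{align*}
\tilde V^*\le V_*=V \quad\text{and}\quad V=V^*\le \tilde V_*.
\end{align*}
Combined with $\tilde V_*\le \tilde V\le \tilde V^*$, these give $\tilde V=V$. By the same argument, $\tilde V$ is continuous.

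\emph{Main obstacle.} The argument is essentially bookkeeping. The only delicate point is that Lemma \ref{comparison1} must be applicable to semicontinuous envelopes rather than continuous functions. I will check that its proof only uses upper semicontinuity of $u$ and lower semicontinuity of $v$, together with polynomial growth of each, and that the non-local term $\mathcal{I}$ is well defined for such functions. The latter holds because $\int y^{2m}\nu(dy)<\infty$ makes $\int_0^\infty u(x+y)\,\nu(dy)$ finite under polynomial growth of order $m$. Growth of the envelopes should be checked with a constant that may change but stays of the form $M'(1+|x|^m)$.
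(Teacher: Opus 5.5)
Your proposal is correct and is essentially the paper's argument. The paper takes two polynomially growing viscosity solutions $u,v$, applies Lemma~\ref{comparison1} to get $u^*\le v_*$ and $v^*\le u_*$, and sandwiches these with $u_*\le u\le u^*$ and $v_*\le v\le v^*$, obtaining uniqueness and continuity in a single symmetric pass, whereas you split this into a continuity step and a uniqueness step; the checks you flag (polynomial growth passing to the envelopes, and Lemma~\ref{comparison1} needing only semicontinuity) are left implicit in the paper but are exactly what justifies applying the lemma to the envelopes.
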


\begin{proof}
Let both $x\to u(x)$ and $x\to v(x)$ are viscosity solutions satisfying the polynomial growth condition to the HJB equation \eqref{eq:HJB}.
Recall that $u_*$ and $u^*$ are respectively the u.s.c. envelopes of $u$ and $v$. Then, we have from Lemma \ref{comparison1} that $u^*\leq v_*$ and $v^*\leq u_*$ on $\R$. However, we already have $u_*\leq u\leq u^*$ and $v_*\leq v\leq v^*$ on $\R$. Hence, one has $u_*=u=u^*=v_*=v=v^*$ on $\R$. This proves the uniqueness of the viscosity solution to the HJB equation \eqref{eq:HJB}, and the value function $x\to V(x)$ is both l.s.c. and u.s.c, hence continuous.
\end{proof}

\section{Classical Solution and Verification Result}\label{sec:verification}

This section addresses the well-posedness of the HJB equation \eqref{eq:HJB} in the classical sense. As stated in Theorem \ref{unique}, the value function $x\to V(x)$ given by \eqref{valuefunction2} is the unique viscosity solution of Eq. \eqref{eq:HJB}. However, deriving the optimal pricing policy and validating the verification theorem demands enhanced regularity of the solution. For this reason, we seek to improve the regularity of $x\to V(x)$,  upgrading its characterization from a viscosity solution to a classical $C^2$ solution.

A key observation is that the nonlocal integral term in \eqref{eq:HJB} couples to the value function in a particular manner. If we regard this term as an inhomogeneous source term depending explicitly on $x\to V(x)$, the original integro-differential equation simplifies to a second-order ODE. More precisely, we introduce the following equation, which is defined on $\R$,
\begin{align}\label{eq:ODE}
&\rho u(x)-\frac{1}{2}\sigma^2u''(x)+\delta xu'(x)+\Phi(x)-\lambda^{\rm c}(x)\sup_{q\in \R_+}\left\{\left(1-\mu^{\rm c}\left(\frac{q}{g(x)}\right)\right)q\right\}\nonumber\\
&\qquad-\lambda^{\rm dp}(x)\sup_{p\in \R_+}\left\{\mu^{\rm dp}(p)\int_0^{\infty}(V(x+y)-V(x)-py)\nu(dy)\right\}=0.
\end{align}
Given the value function $x\to V(x)$ defined in \eqref{valuefunction2}, Eq. \eqref{eq:ODE} belongs to a family of nondegenerate HJB equations free of nonlocal integral terms. To establish a connection between \eqref{eq:ODE} and \eqref{eq:HJB}, we first introduce an equivalent definition.
\begin{definition}\label{def2}
\begin{enumerate}
\item[{\rm(i)}] A lower semi‑continuous (l.s.c) function  $x\to v(x)$ is a viscosity supersolution to Eq.~\eqref{eq:HJB} if, for any $\bar{x}\in\R$ and test function $\varphi\in C^2(\R)$ such that $v-\varphi$ attains a minimum at $\bar{x}$ in the sense $(v-\varphi)(\bar{x})=\min_{x\in \R}(v-\varphi)(x)$, it holds that $\rho v(\bar{x})-\mathcal{L}\varphi(\bar{x})-\mathcal{I}v(\bar{x})\geq 0$. 
\item[{\rm(ii)}] An upper semi‑continuous (u.s.c) function $u$ is a viscosity subsolution to \eqref{eq:HJB} if, for any $\bar{x}\in\R$ and test function $\varphi\in C^2(\R)$ such that $u-\varphi$ attains a maximum at $\bar{x}$ in the sense $(u-\varphi)(\bar{x})=\max_{x\in \R}(u-\varphi)(x)$, it holds that $\rho u(\bar{x})-\mathcal{L}\varphi(\bar{x})-\mathcal{I}u(\bar{x})\leq 0$.
\item[{\rm(iii)}] A locally bounded function $x\to u(x)$ is a viscosity solution of \eqref{eq:HJB} if its u.s.c. envelope $x\to u^*(x)$ and its l.s.c. envelope $x\to u_*(x)$ are respectively a viscosity subsolution and a viscosity supersolution to \eqref{eq:HJB}.
\end{enumerate}
\end{definition}
The equivalence between Definition \ref{def1} and Definition \ref{def2} is standard in the viscosity solution theory for integro-differential equations, see, e.g. \cite{BI08}. 
It enable us to investigate the solvability of Eq. \eqref{eq:ODE}. From Definition \ref{def2}, we can see that the value function $V$ defined by \eqref{valuefunction2} being a viscosity solution to Eq. \eqref{eq:HJB}, is automatically a viscosity solution to Eq. \eqref{eq:ODE}. Consequently, If Eq. \eqref{eq:ODE} admits a unique viscosity solution $u$, then $u = V$. Moreover, $u\in C^2(\R)$ implies $V\in C^2(\R)$.

The following result provides the well-posedness of classical solutions to Eq. \eqref{eq:ODE} and \eqref{eq:HJB}. 
\begin{theorem}\label{classical}
Let Assumption \ref{ASS} hold. Then, the value function $V$ defined by \eqref{valuefunction2} is the only classical solution to Eq. \eqref{eq:ODE} and Eq. \eqref{eq:HJB} satisfying the polynomial growth condition.
\end{theorem}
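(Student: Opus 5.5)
The plan is to freeze the nonlocal term at the value function and treat \eqref{eq:ODE} as a linear, uniformly elliptic second-order ODE with a known continuous source. Define
\[
G(x):=\Phi(x)-\lambda^{\rm c}(x)H^{({\rm c})}(x;q^*(x))-\lambda^{\rm dp}(x)\sup_{p\in\R_+}H_V^{({\rm dp})}(x;p).
\]
By Theorem \ref{unique}, $V$ is continuous with polynomial growth (Lemma \ref{growth}). Since $\lambda^{\rm dp}$ is bounded and Lipschitz and $\nu$ has finite $2m$-moments, the map $x\mapsto \int_0^\infty (V(x+y)-V(x))\nu(dy)$ is continuous with polynomial growth. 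The supremum over $p$ equals $\max_p \mu^{\rm dp}(p)(z(x)-p)\int y\,\nu(dy)$, which is a continuous function of $z(x)$ because it is a supremum of equicontinuous functions of $z$ that is attained on a compact set; this is the computation already done in the proof of Lemma \ref{control}, which did not use the $C^2$ property of $V$ for this part. The term $H^{({\rm c})}(x;q^*(x))=h_1(\hat q)g(x)$ is continuous. Hence $G$ is continuous with polynomial growth, and \eqref{eq:ODE} reads $\rho u-\frac12\sigma^2u''+\delta xu'+G=0$.

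Existence and regularity come next. For each $R>0$, I will solve the Dirichlet problem on $(-R,R)$ with boundary data $u(\pm R)=V(\pm R)$. Since the operator is linear, uniformly elliptic ($\sigma>0$), and has continuous coefficients and $\rho>0$, classical ODE theory (or Gilbarg--Trudinger, where $G$ continuous gives $W^{2,p}$ and then $C^2$ by the ODE structure $u''=\frac{2}{\sigma^2}(\rho u+\delta xu'+G)$) gives a unique $u_R\in C^2((-R,R))\cap C([-R,R])$. A classical solution is in particular a viscosity solution of the frozen equation, and $V$ is a viscosity solution of the same Dirichlet problem by Definition \ref{def2}. The comparison principle for this local, proper, linear equation on a bounded interval then yields $u_R=V$ on $[-R,R]$. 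Since $R$ is arbitrary, $V\in C^2(\R)$. Because the frozen source coincides with $\mathcal I V$, $V$ solves \eqref{eq:HJB} classically.

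Uniqueness will go through Lemma \ref{comparison1}. If $w$ is any classical solution of \eqref{eq:HJB} with polynomial growth, then $w$ is a viscosity solution, so $w=V$ by Theorem \ref{unique}. For \eqref{eq:ODE} with $V$ frozen, suppose $w$ is a classical solution with polynomial growth. Then $w-V$ solves $\rho\phi-\frac12\sigma^2\phi''+\delta x\phi'=0$. I plan to use the Lyapunov function $\epsilon(1+x^{2m})$: since $\delta x\cdot 2mx^{2m-1}\ge0$ and $\sigma^2$ times the second derivative is of lower order, $\rho(1+x^{2m})-\frac12\sigma^2(\cdot)''+\delta x(\cdot)'>0$ outside a compact set, after adding a constant if needed. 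The maximum principle on large intervals, followed by letting $\epsilon\to0$, then gives $w=V$.

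I expect the main obstacle to be the continuity of the frozen nonlocal term, together with the identification $u_R=V$. The latter requires a comparison principle between a viscosity solution and a classical solution with the same boundary values. I would prove it by doubling variables exactly as in Lemma \ref{comparison1}, but on a bounded interval and without the nonlocal term, so no growth penalty is needed. The only extra care is the boundary, which is handled by the equality of the Dirichlet data and the continuity of both functions up to $\pm R$.
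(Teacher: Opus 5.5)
Your proposal is correct and follows essentially the same route as the paper: freeze the nonlocal term at $V$, solve the localized Dirichlet problem for the resulting linear nondegenerate ODE classically, identify that solution with $V$ by comparison on bounded intervals (using that $V$ is a viscosity solution of the frozen equation via Definition \ref{def2}), and get uniqueness for \eqref{eq:HJB} from Theorem \ref{unique}. The differences are minor. You verify continuity of $\mathcal{I}V$ directly, where the paper cites an external result. You invoke one-dimensional ODE theory rather than a Schauder-type theorem that formally needs a H\"older source. You prove uniqueness for \eqref{eq:ODE} among classical solutions with the barrier $\epsilon(C+x^{2m})$, whereas the paper appeals to the doubling-variables argument of Lemma \ref{comparison1}.
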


\begin{proof}
The proof of uniqueness of viscosity solution to Eq. \eqref{eq:ODE} is in line with that in  Lemma \ref{comparison1} respectively, so we omit them here. Based on these results, we know that value function $x\to V(x)$ defined by \eqref{valuefunction2} is the unique viscosity solution to \eqref{eq:ODE}. To elevate this viscosity solution to a classical one, let us consider a localized Dirichlet problem. For arbitrary $x_1,x_2\in\R$ with $x_1\leq x_2$, we consider 
\begin{align}\label{dirichlet}
\rho u(x)-\mathcal{L}u(x)-\mathcal{I}V(x)=0,\  u(x_1)=V(x_1), \ u(x_2)=V(x_2).
\end{align}
It follows from Proposition D.6 in \cite{HL12} under Assumption \ref{ASS} that $\mathcal{I}V(x)$ is continuous in $x\in \R$. Then, classical results for linear elliptic PDEs (and, in one dimension, ODEs) guarantee the existence and uniqueness of a classical solution $u \in C^2(x_1, x_2) \cap C([x_1, x_2])$ to Eq.~\eqref{dirichlet} (see, e.g., Theorem 6.8 of \cite{GT77} or Theorem 6.2.4 of \cite{F75}). By definition, this classical solution $u$ is automatically a viscosity solution to \eqref{dirichlet} on the bounded domain $(x_1, x_2)$.  By virtue of comparison principles (\cite{P09})  on bounded domains, it follows that $u = V$ on $(x_1, x_2)$. Since $x_1,x_2\in\R$ are arbitrary, we conclude that $V\in C^2(\R)$, which implies $x\to V(x)$ is indeed a classical solution to Eq. \eqref{eq:ODE} and Eq. \eqref{eq:HJB}. 
\end{proof}

A verification result is provided to ensure that the classical solution to the HJB equation \eqref{eq:HJB} is indeed the value function.
\begin{theorem}[Verification result]
Let $v\in C^2(\R)$ be the classical solution to Eq.  \eqref{eq:HJB}. Consider the optimal (feedback) control functions $x\to p^*(x)$ and $x\to q^*(x)$ given by \eqref{controlfuncition} and the state process $X^*=(X_t^*)_{t\geq 0}$ satisfying the dynamics \eqref{state2} with $(p,q)=(p_t,q_t)_{t\geq 0}$ replaced by $(p^*(X^*), q^*(X^*))=(p^*(X_t^*), q^*(X_t^*))_{t\geq0}$. Then, we have, for any $x\in\R$, 
\begin{align*}
v(x)=V(x)=J(x; p^*, q^*)=\sup_{(p, q)\in {\cal U}}J(x;p,q),    
\end{align*}
and $(p^*(X^*), q^*(X^*))\in{\cal U}$ is an optimal pricing strategy.
\end{theorem}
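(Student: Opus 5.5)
We prove the verification result with the standard two-part argument: first $v\ge J(x;p,q)$ for every admissible control, then equality along the feedback control $(p^*,q^*)$. A preliminary remark settles the polynomial growth of $v$. The statement does not state it as an assumption, so we either regard it as implicit, in line with Theorem \ref{classical}, which gives uniqueness of classical solutions only within the polynomial-growth class and hence $v=V$ directly, or we impose $|v(x)|\le M(1+|x|^m)$. With this growth bound, the moment estimate $\mathbb{E}[|X_t^x|^m]\le C_m(1+|x|^m)e^{\rho_0 t}$ from the proof of Lemma \ref{growth} holds uniformly over ${\cal U}$. Together with $\rho>\rho_0$, it gives $\mathbb{E}[e^{-\rho T}|v(X_T)|]\to0$ as $T\to\infty$, and it shows that the running profit is integrable.

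For the inequality $v\ge J$, fix $(p,q)\in{\cal U}$ and let $X=X^x$ solve \eqref{state2}. We apply It\^o's formula for jump-diffusions to $e^{-\rho t}v(X_t)$ on $[0,T\wedge\tau_R]$, where $\tau_R:=\inf\{t:|X_t|\ge R\}$. The drift equals $e^{-\rho t}$ times
\[
-\rho v+\tfrac12\sigma^2v''-\delta xv'+\lambda^{\rm dp}(X_{t})\mu^{\rm dp}(p_t)\int_0^\infty(v(X_{t}+y)-v(X_{t}))\nu(dy).
\]
Adding $F(X_t,p_t,q_t)$ to this drift produces the Hamiltonian evaluated at the particular pair $(p_t,q_t)$. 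By \eqref{eq:HJB}, that quantity is $\le 0$, since $\lambda^{\rm c}(1-\mu^{\rm c}(q/g))q\le\lambda^{\rm c}\sup_q\{\cdot\}$ and $\mu^{\rm dp}(p)\int(v(x+y)-v(x)-py)\nu(dy)\le\sup_p\{\cdot\}$. The stochastic integral against $W$ is a true martingale up to $\tau_R$ because $v'$ is bounded on $[-R,R]$. The compensated jump integral is also a martingale: on $\{|X_{s-}|\le R\}$ its integrand is bounded by $C(1+R^m+y^m)$, which is integrable thanks to the $2m$-th moment of $\nu$. Taking expectations gives
\[
v(x)\ge\mathbb{E}\Big[\int_0^{T\wedge\tau_R}e^{-\rho t}F(X_t,p_t,q_t)dt+e^{-\rho(T\wedge\tau_R)}v(X_{T\wedge\tau_R})\Big].
\]
We then let $R\to\infty$ and afterwards $T\to\infty$. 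Dominated convergence applies to the running-cost part: $F$ is bounded above by $C(1+|x|^m)$ (proof of Lemma \ref{growth}), and the negative part involves $p_t\mu^{\rm dp}(p_t)$, which is handled by monotone convergence, or by noting that $J=-\infty$ is trivial. For the terminal term we need uniform integrability of $e^{-\rho(T\wedge\tau_R)}v(X_{T\wedge\tau_R})$. We obtain it from a bound on $\mathbb{E}[\sup_{t\le T}|X_t|^{m}]$ via BDG, or by using the moment estimate at order slightly above $m$, which $\rho>\rho_0$ permits after shrinking the margin. This yields $v(x)\ge J(x;p,q)$, so $v\ge V$.

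For equality, we take the feedback $(p^*,q^*)$ from Lemma \ref{control}, which applies because $v\in C^2$. The map $p^*$ is locally Lipschitz with values in $[0,b]$, so it is bounded; in fact $b<\infty$ is implicit whenever $b_f$ is finite, and otherwise we use $\hat p(z)\le z$. Since $\lambda^{\rm dp}$ is Lipschitz and bounded, and $\mu^{\rm dp}$ is $C^1$ with finite one-sided derivatives, the closed-loop jump intensity $x\mapsto\lambda^{\rm dp}(x)\mu^{\rm dp}(p^*(x))$ is locally Lipschitz and bounded. The SDE \eqref{state2} with this intensity therefore has a unique strong solution $X^*$, by Picard iteration with localization as in the Remark. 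Predictability of $(p^*(X^*_{t-}),q^*(X^*_{t-}))$ and the integrability $\mathbb{E}\int_0^t(p^*+q^*)ds<\infty$ follow from the bound on $p^*$ and from $q^*=\hat q g$ with $g$ bounded. Hence the feedback pair lies in ${\cal U}$. Repeating the It\^o computation, the inequality becomes an equality because the suprema in \eqref{eq:HJB} are attained at $(p^*,q^*)$. Passing to the limit in the same way gives $v(x)=J(x;p^*,q^*)\le V(x)$. Combined with the first part, $v=V=J(x;p^*,q^*)$.

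The main obstacle is the limit passage, namely the transversality condition $\lim_{T}\mathbb{E}[e^{-\rho T}v(X_T)]=0$ and the uniform integrability in $R$, uniformly over admissible controls. We rely on the polynomial growth of $v$ (taken from Theorem \ref{classical}) and on the condition $\rho>\rho_0$. If the order-$m$ estimate is too tight for the uniform-integrability step, the first thing to try is Fatou's lemma on the $\liminf$ together with the moment bound, which suffices because only the negative part of $v$ needs control from below. A secondary technical point is well-posedness of the closed-loop SDE; local Lipschitz continuity of $p^*$ from Lemma \ref{control}, together with boundedness of the intensity, is what makes it go through.
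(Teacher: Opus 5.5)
Your proposal is correct and follows essentially the same route as the paper: It\^o's formula for $e^{-\rho t}v(X_t)$ with localization, then the HJB inequality to get $J(x;p,q)\le v(x)$ for every admissible control, then transversality from the polynomial growth of $v$ and $\rho>\rho_0$, then equality along the feedback $(p^*,q^*)$, and finally admissibility via $p^*\le |z|$, $q^*=\hat q g$ and local Lipschitz continuity of $x\mapsto\lambda^{\rm dp}(x)\mu^{\rm dp}(p^*(x))$. Your version is more careful than the paper's in several places (you flag the implicit polynomial-growth hypothesis on $v$, justify the martingale property of the compensated jump integral and the uniform integrability in the localization, and use $p^*(X^*_{t-})$ for predictability), but these refine the same argument rather than change it.
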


\begin{proof}
For any $(p,q)=(p_t,q_t)_{t\geq 0}\in {\cal U}$, let $X=(X_t)_{t\geq 0}$ be the state process satisfying the dynamics \eqref{state2} under $(p,q)\in {\cal U}$ and $v\in C^2(\R)$ be a classical solution to \eqref{eq:HJB}. For $n\geq1$, define $\tau_n=\inf\{t\geq 0;~\int_0^t|v'(X_s)\sigma|^2ds\geq n\}$ with the convention $\inf\varnothing=+\infty$. For any $T>0$, by applying It\^o's formula to $e^{-\rho t}v(X_t)$ from $t=0$ and $T\wedge \tau_n$, we have 
\begin{align*}
&e^{-\rho (T\wedge \tau_n)}v(X_{T\wedge \tau_n})=v(x)+\int_0^{T\wedge \tau_n}e^{-\rho t}\left(-\rho v(X_t)+\frac{\sigma^2}{2}v''(X_t)-\delta X_tv'(X_t)\right)dt\\
&\quad+\int_0^{T\wedge \tau_n}\int_0^{\infty}\int_0^{\infty}e^{-\rho t}1_{\{z\leq \lambda^{\rm dp}(X_{t-})\mu^{\rm dp}(p_t)\}}\left(v(X_{t-}+y)-v(X_{t-})\right)\mathcal{N}(dt,dy,dz)\\
&\quad+\int_0^{T\wedge \tau_n}e^{-\rho t}v'(X_t)\sigma dW_t.
\end{align*}
Taking expectations on both sides of the above display, we obtain
\begin{align*}
&\mathbb{E}_x[e^{-\rho {T\wedge \tau_n}}v(X_{T\wedge \tau_n})]=v(x)+\mathbb{E}_x\left[\int_0^{T\wedge \tau_n}e^{-\rho t}\left(-\rho v(X_t)+\frac{\sigma^2}{2}v''(X_t)-\delta X_tv'(X_t)\right)dt\right]\\
&\quad+\mathbb{E}_x\left[\int_0^{T\wedge \tau_n}\int_0^{\infty}e^{-\rho t}\left(v(X_{t-}+y)-v(X_{t-})\right)\lambda^{\rm dp}(X_{t-})\mu(p_t)\nu(dy)dt\right].
\end{align*}
By the polynomial growth condition of the classical solution $x\to v(x)$ to \eqref{eq:HJB}, one may apply the dominated convergence theorem (DCT) and tend $n$ to infinity. By using \eqref{eq:HJB}, it holds that
{\small\begin{align}\label{eq:veryine0}
&\mathbb{E}_x\left[e^{-\rho T}v(X_T)\right]\leq v(x)\\
&~-\mathbb{E}_x\left[\int_0^Te^{-\rho t}\left(\lambda^{\rm c}(X_t)\left(1-\mu^{\rm c}\left(\frac{q_t}{g(X_t)}\right)\right)q_t-\lambda^{\rm dp}(X_t)\mu^{\rm dp}(p_t)p_t\int_0^{\infty}y\nu(dy)-\Phi(X_t)\right)dt\right].\nonumber
\end{align}}
Using the growth condition (with growth constant $M$) satisfied by $x\to v(x)$ again, there exists a constant $C>0$ such that
\begin{align*}
\mathbb{E}_x\left[e^{-\rho T}|v(X_T)|\right]\leq Me^{-\rho T}\left\{1+\mathbb{E}_x\left[|X_T|^m\right]\right\}\leq Me^{-\rho T}+Ce^{-(\rho-\rho_0) T}\left(1+|x|^m\right).
\end{align*}
Hence, for $\rho >\rho_0$, we arrive at $\lim_{T\to \infty}\mathbb{E}[e^{-\rho T}v(X_T)]=0$. This together with \eqref{eq:veryine0}, letting $T\to\infty$, we have from DCT that, for any $x\in \R$,
\begin{align}\label{ve1}
&\mathbb{E}_x\left[\int_0^{\infty}e^{-\rho t}\left(\lambda^{\rm c}(X_t)\left(1-\mu^{\rm c}\left(\frac{q_t}{g(X_t)}\right)\right)q_t-\lambda^{\rm dp}(X_t)\mu^{\rm dp}(p_t)p_t\int_0^{\infty}y\nu(dy)-\Phi(X_t)\right)dt\right]\nonumber\\
&\qquad\leq v(x).
\end{align}
By the arbitrariness of $(p,q)$, we have $V(x)\leq v(x)$ for all $x\in \R$. On the other hand,
the equality in \eqref{ve1} holds when $(p,q)=(p^*,q^*)$, which implies $V(x)\geq v(x)$ for $x\in \R$. Then, we have $v=V$ on $\R$. 

Next, we show $(p^*,q^*)\in \mathcal{U}$. The integrability of $p^*$ can be deduced from the fact that 
\begin{align*}
p^*(x)\leq |z(x)|:=\left| \frac{\int_0^{\infty}(V(x+y)-V(x))\nu(dy)}{\int_0^{\infty}y\nu(dy)}\right|,\quad \forall x\in\R,     
\end{align*}
and the polynomial condition satisfied by $x\to V(x)$.
The control $q^*$ is also integrable as it admits the feedback control form $q^*(x)=\hat{q}g(x)$ and $x\to g(x)$ is bounded on $\R$.
It suffices to show that there exists a unique solution to Eq. \eqref{state2} under $(p^*, q^*)$. 
By the locally Lipschitz property of $p^*(x)$ obtained in Lemma \ref{control}, we have that for any compact set $K\subset \R$, there exists a positive constant $L_K>0$ such that
\begin{align*}
|p(x_1)-p(x_2)|\leq L_K|x_1-x_2|, \quad \forall x_1, x_2 \in K.  
\end{align*}
By using Assumption~\ref{ASS}, $p\to\mu^{\rm dp}(p)$ is continuous on $\R_+$ and $C^1$ on $(0, b)$ with $\mu^{\rm dp}(p) = 1$ for all $p \ge b$. Under Assumption \ref{ASS}, $p\to\mu^{\rm dp}(p)$ has bounded one-sided derivatives at both boundaries $0$ and $b$, ensuring its Lipschitz continuity on any compact subset of $\R_+$.
Therefore, there exists a constant $M_{K'} > 0$ such that
\begin{align*}
|\mu^{\rm dp}(p_2) - \mu^{\rm dp}(p_1)| \le M_{K'} |p_2 - p_1|, \quad \forall p_1, p_2 \in K':=\{p^*(x):~x\in K\}.    
\end{align*}
For any $x_1, x_2 \in K$, we  obtain
\begin{align*}
|\mu^{\rm dp}(p^*(x_2)) - \mu^{\rm dp}(p^*(x_1))| \le M_{K'} |p^*(x_2) - p^*(x_1)| \le M_{K'} L_K |x_2 - x_1|.   
\end{align*}
Defining $L_{\mu} := M_{K'} L_K < +\infty$, we have
\begin{align*}
|\mu^{\rm dp}(p^*(x_2)) - \mu^{\rm dp}(p^*(x_1))| \le L_{\mu} |x_2 - x_1|, \quad \forall x_1, x_2 \in K.    
\end{align*}
Hence, the mapping $x \mapsto \mu^{\rm dp}(p^*(x))$ is locally Lipschitz continuous on $\R$. Moreover, $x\to \lambda^{dp}(x)\mu^{\rm dp}(p^*(x))$ is bounded on $\R$. Following a standard argument as in Theorem 5.2.2 of \cite{F75}, we can obtain the desired result.
\end{proof}

\section{Numerical Examples}\label{sec:example}

This section implements a numerical experiment to demonstrate the properties of the optimal pricing strategies and value function derived earlier. To acquire tangible numerical results, we utilize canonical functional forms prevalent in data pricing research, complying with all the model assumptions established above.

Following \cite{ZP17}, we consider the case where both the providers’ privacy valuations and the consumers’ willingness‑to‑pay follow exponential distributions. In particular, $\mu^{\rm dp}(x)=1-e^{- \alpha_1 x}$ and $\mu^{\rm c}(x)=1-e^{- \alpha_2x}$ for $x\in \R$. Here, the intensity parameters $\alpha_1, \alpha_2>0$. For the cost function which describes the platform’s operational expenses (e.g., storage, maintenance and data processing), we take the following quadratic cost function given by $\Phi(x) = \phi_0+\phi_1x+\phi_2x^2$ for $x\in\R$. Here, the parameters $\phi_0,\phi_1,\phi_2>0$. This convex form reflects the empirically observed feature of increasing marginal costs: as the data volume grows, each additional unit of data becomes more expensive to store and process.
The intensities of data providers and consumers are assumed to depend linearly on the positive part of the data volume in the sense that
\begin{align*}
\lambda^{\rm dp}(x)=\min\{M^{\rm dp}, L^{\rm dp}x^++l^{\rm dp}\},\quad \lambda^{\rm c}(x)= L^{\rm c}x^++l^{\rm c},
\end{align*}
where, $x^+:=\max\{x,0\}$ for $x\in\R$ and the parameters $M^{\rm dp}, L^{\rm dp}, L^{\rm c}, l^{\rm dp}, l^{\rm c}$ are positive constants. The upper bound $M^{\rm dp}$ prevents the provider arrival rate from growing without bound, which is realistic given finite market size. The quality of the data product, which influences consumers’ willingness‑to‑pay, is modeled as the following saturating function given by
\begin{align}\label{eq:numerg}
g(x)=\begin{cases}
\displaystyle \qquad\qquad\qquad\epsilon \Lambda, & x\leq 0,\\
\displaystyle \epsilon\Lambda+(1-\epsilon)\Lambda\left(1-\frac{1}{1+k_1x}\right), & x>0,
\end{cases}     
\end{align}
where $\epsilon\in (0,1)$, the parameter $\Lambda>0$ is the maximum achievable quality (e.g., accuracy for machine learning models) and $k_1>0$ are curve fitting parameters. The saturating function $g:\R\to(0,\infty)$ is non-decreasing, which has the decreasing marginal quality and it increases asymptotically towards $\Lambda$ as the data size grows. A slight modification is made to the quality function proposed by \cite{SK22}. 
\begin{figure}[h]
    \centering
    \includegraphics[width=0.6\textwidth]{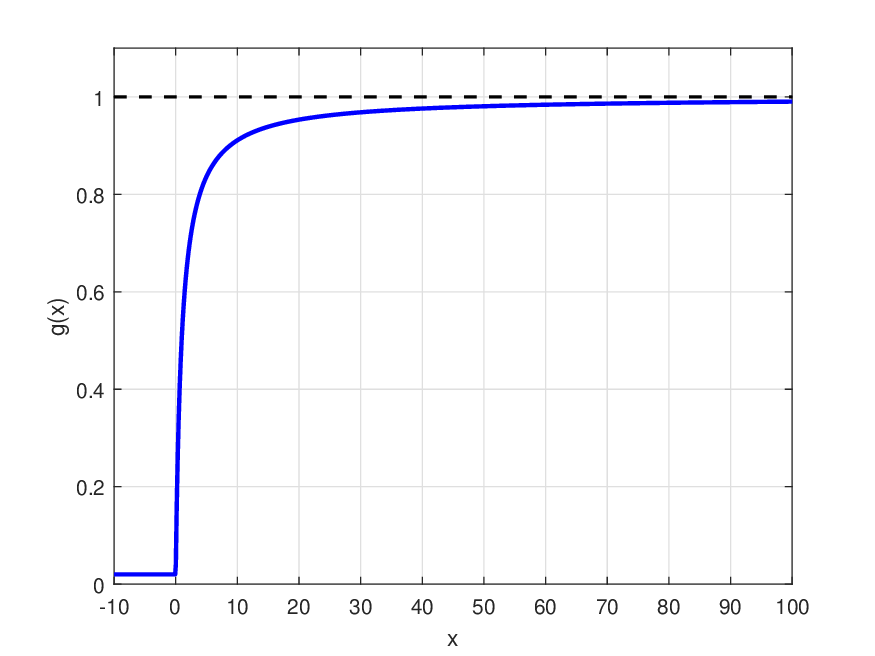}
    \caption{The curve of the saturating function $x\to g(x)$ defined by \eqref{eq:numerg}.}
    \label{fig:g}
\end{figure} 
From Lemma \ref{control}, there exists a unique pair of optimal control $(p^*,q^*)\in{\cal U}$. For the value function $x\to V(x)$ defined by \eqref{valuefunction2}, i.e., $V(x)=J(x;p^*,q^*)$ for $x\in\R$, let us define $\Delta_V(x):=\int_{0}^{\infty}\left(V(x+y)-V(x)\right)\nu(dy)$.
Under the above setting, we have from Lemma \ref{control} that, the optimal provider's price is given by, for $x\in\R$, 
\begin{align*}
p^*(x)=\begin{cases}
\displaystyle ~~0, & \Delta_V(x)\leq 0,\\
\displaystyle \hat{p}(x), & \Delta_V(x)>0,
\end{cases}
\end{align*}
where, for any $x\in\R$, the quantity $\hat{p}(x)$ solves the nonlinear equation in the unknown variable $z\in\R_+$ given by
\begin{align}\label{eq:equhatp0}
e^{ \alpha_1 z}+ \alpha_1z=\frac{ \alpha_1}{\int_{0}^{\infty}y\nu(dy)} \Delta_V(x)+1.    
\end{align}
In fact, the function $z\to h(z):=e^{\alpha_1 z}+\alpha_1 z$ is strictly increasing on $\R_+$, which satisfies $h(0)=1$ and $\lim_{z\to\infty}h(z)=+\infty$. Thus, for $\Delta_V(x)\geq 0$, there exists a unique solution $z\in \R_+$ to Eq. \eqref{eq:equhatp0}. For $\Delta<0$, no positive solution exists, and the optimal provider price is given by the boundary value $0$.  For the optimal sale price $x\to q^*(x)$, we have $q^*(x)=\frac{g(x)}{ \alpha_2}$ for $x\in\R$. In view of \eqref{eq:HJB}, the value function $x\to V(x)$ solves the following equation:
\begin{align}\label{exampleode}
\rho V(x)+\delta xV'(x)-\frac{\sigma^2}{2}V''(x)+\Phi(x)=\lambda^{\rm c}(x)H^{({\rm c})}(x;q^*)+\lambda^{\rm dp}(x)H_V^{({\rm dp})}(x;p^*),
\end{align}
where, the function $H_V^{({\rm dp})}(x;p^*)$ for $x\in\R$ is defined by
\begin{align*}
H_V^{({\rm dp})}(x;p^*)=\begin{cases}
\displaystyle ~~~~~~~~~~~~~~~~~~0,& \Delta_V (x)\leq 0,\\[0.4em]
\displaystyle \frac{\int_{0}^{\infty}y\nu(dy)}{ \alpha_1} \left(e^{ \alpha_1 \hat{p}(x)}+e^{-\alpha_1 \hat{p}(x)}-2\right), & \Delta_V(x)>0,
\end{cases}    
\end{align*}
and the function $H^{({\rm c})}(x;q^*)$ for $x\in\R$ is defined by $H^{({\rm c})}(x;q^*)=\frac{g(x)}{\alpha_2}e^{-1}$ for $x\in \R$.

Because an analytical solution to the HJB equation \eqref{exampleode} is not available,  we turn to solve it numerically using a finite difference method for analyzing structural properties satisfied by the optimal price strategies. The computational procedure consists of the following steps:
\begin{itemize}
\item[1.]{\bf Domain truncation and grid division:} The unbounded state space $\R$ is truncated to a sufficiently large interval $[x_{\min}, x_{\max}]$. We then divide the interval $[x_{\min}, x_{\max}]$ into $N_x$ segments with the step size $\Delta x=\frac{x_{\max}-x_{\min}}{N_x}$ and  $x_i=x_{\min}+(i-1)\Delta x, i=1, \ldots, N_x$. And we obtain $N_y$, $y_{\max}$ and $ \Delta y$ by a similar argument.
\item[2.]{\bf Discretizations of derivative terms:} The upwind schemes is used to approximate the first-order derivative term. That is, use the forward difference if the velocity at a point is non-negative, and use the backward difference otherwise. A central difference schemes is used to approximate the second-order derivative term.
\item[3.]{\bf Approximation of the integral term:} For the integral term, we use the following form of approximation: 
\begin{align*}
\mathcal{I}V(x_i) \approx \sum_{k=1}^{N_y} (V_{\text{tail}}(x_i + y_k) - V_i) \eta e^{-\eta y_k}\Delta y,    
\end{align*}
where, $V_{\rm tail}$ is set to be $V_{\rm tail}=a_Rx^2+b_Rx+c_R$, and $a_R, b_R, c_R$ are obtained by fitting on the current solution of $x\to V(x)$ near the boundaries.
\item[4.]{\bf Discrete system of equations:} We obtain the discrete version of Eq. \eqref{exampleode} as follows:
\begin{align*}
&\rho V(x_i)-\frac{\sigma^2}{2}\frac{V(x_{i+1})-2V(x_i)+V(x_{i-1})}{\Delta x^2}+\delta x_i^+\frac{V(x_{i+1}-V(x_i))}{\Delta x}\\
&\quad+\delta x_i^-\frac{V(x_i)-V(x_{i-1})}{\Delta x}=\lambda^{\rm dp}(x_i)H_V^{({\rm dp})}(x_i; p^*)+\lambda^{\rm c}(x_i)H^{({\rm c})}(x_i; q^*)-\Phi(x_i).
\end{align*}
For $i=2, \ldots, N_x-1$, denote by $b_i:=\lambda^{\rm dp}(x_i)H_V^{({\rm dp})}(x_i; p^*)+\lambda^{\rm c}(x_i)H^{({\rm c})}(x_i; q^*)-\Phi(x_i)$ and $V_i:=V(x_i)$. For $i=1$ and $i=N_x$, we impose quadratic extrapolation conditions consistent with the quadratic growth of $V$, that is, $V_1:=3V_2-3V_3+V_4$, $V_{N_x}:=3V_{N_x-1}-3V_{N_x-2}+V_{N_x-3}$.
Then, the following sparse linear system is obtained:
\begin{align}\label{linearsystem}
\boldsymbol{A}\boldsymbol{V}=\boldsymbol{b},    
\end{align}
where, the vectors $\boldsymbol{V}=(V_1, \ldots, V_{N_x})^{\top}$ and $\boldsymbol{b}=(b_1, \ldots, b_{N_x})^{\top}$. The matrix coefficient $\boldsymbol{A}=(A_{i,j})_{N_x\times N_x}$ is given by
\begin{align*}
\begin{cases}
\displaystyle A_{1,1}=1,~~ A_{1,2}=-3,~~ A_{1,3}=3,~~ A_{1,4}=-1,\\[0.4em]
\displaystyle A_{N_x,N_x-3}=-1,~~ A_{N_x,N_x-2}=3,~~ A_{N_x,N_x-1}=-3,~~ A_{N_x,N_x}=1,\\[0.4em]
\displaystyle A_{i, i-1}=-\frac{\sigma^2}{2\Delta x^2}-\frac{\delta x_i^-}{\Delta x},~~  A_{i,i}=\rho+\frac{\sigma^2}{\Delta x^2}-\frac{\delta x_i^+}{\Delta x}+\frac{\delta x_i^-}{\Delta x},\\[0.3em] 
\displaystyle A_{i, i+1}=-\frac{\sigma^2}{2\Delta x^2}+\frac{\delta x_i^+}{\Delta x},~~\text{for}~i=2, \ldots,N_x-1.
\end{cases}
\end{align*}

\item[5.]{\bf Iteration with relaxation:} We then adopt the iteration and relaxation technique with the form $V^{k+1}=\theta V^{\rm new}+(1-\theta)V^k$, where $V^{\rm new}$ is obtained by solving the linear system \eqref{linearsystem} and $\theta\in (0,1]$ is the relaxation factor. 
\end{itemize}  

The values of the relevant parameters are provided in Table \ref{para}.
\begin{table}[ht]
	\centering
	\caption{The model parameter values used in the numerical experiment.}
    \label{para}
	\begin{tabular}{@{}lcccccccccccccc@{}}
		\toprule
		\textbf{Parameter}  &  $\alpha_1$ &$\alpha_2$&$k_1$&$\Lambda$ &$\phi_2$& $L^{\rm dp}$ &$L^{\rm c}$ & $M^{\rm dp}$& $l^{\rm dp}$&$l^{\rm c}$ &$\rho$ &$\delta$ &  $\sigma $\\
		\midrule
        \textbf{Value}&0.5& 1.5 &  1.0 &0.8 &0.05 &1.0 &  2.0&1.0&0.1&0.1& 3& 0.1& 0.1\\
		\bottomrule
	\end{tabular}
\end{table}
\begin{figure}[H]
    \centering
    \begin{subfigure}[b]{0.45\textwidth}
        \centering
        \includegraphics[width=\textwidth]{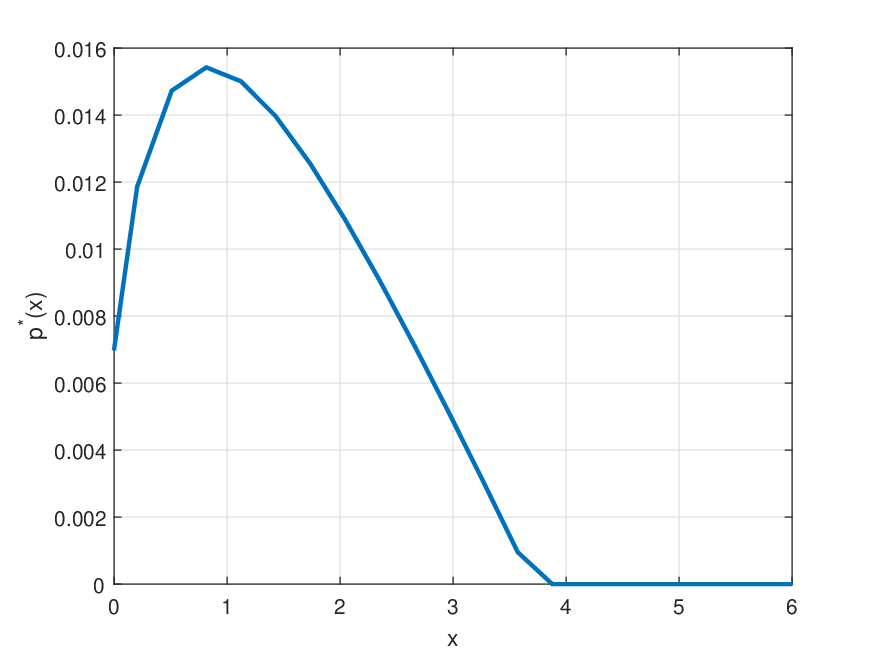}
    \end{subfigure}
    \begin{subfigure}[b]{0.45\textwidth}
        \centering
        \includegraphics[width=\textwidth]{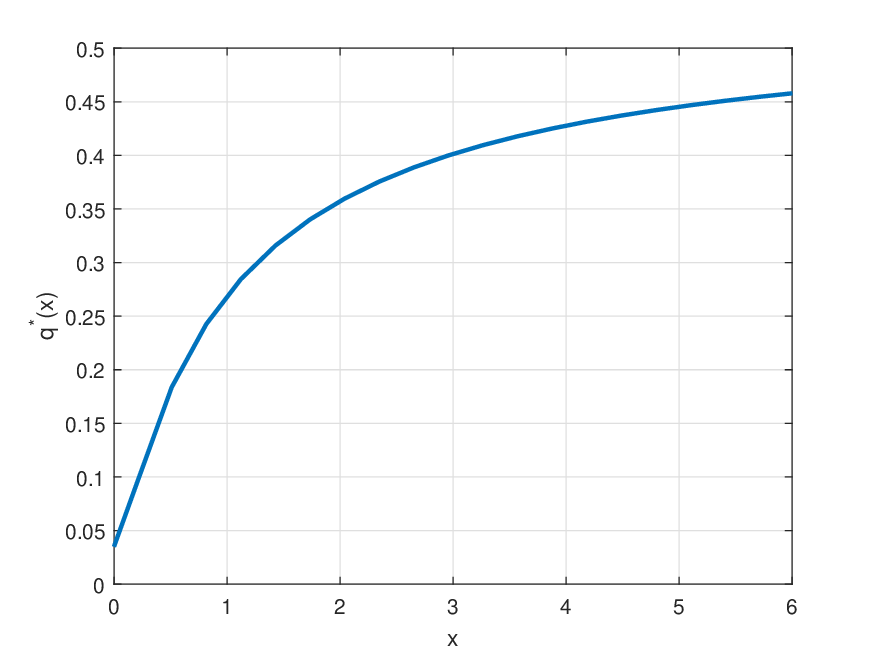}
    \end{subfigure}
    \caption{ Left panel: optimal provider price $x\to p^*(x)$. Right panel: optimal consumer price $ x\to q^*(x)$.}
    \label{fig:pq}
\end{figure}
Figure~\ref{fig:pq} displays the optimal provider price $p^*(x)$ and and the optimal selling price $q^*(x)$ as functions of the current data volume $x$. The optimal consumer price  $q^*(x) $ increases monotonically with respect to the initial volume of data $x$ and eventually converges to a constant, indicating that a larger data volume allows the platform to charge a higher price. This upward trend aligns with the shape of the quality curve.  In contrast, the optimal  provider price $ p^*(x)$ exhibits a non‑monotonic pattern: it first rises from zero, reaches a maximum at an intermediate data volume, and then gradually declines back to zero. 
This non-monotonic pattern mirrors the behavior of the value function $V(x)$. Recall that $\Delta_V(x)$ represents the expected increase in the platform’s profit 
when a batch of random data volume $Y$ is added to the current data volume $x$. 
In other words, $\Delta_V(x)$ measures the marginal benefit of acquiring an 
additional unit of data (in expectation). When $\Delta_V(x)$ is positive, the 
platform expects future profits to rise by accumulating more data, so it is 
willing to pay a positive provider price. However, once $\Delta_V(x)$ becomes 
negative, the expected marginal benefit of additional data no longer outweighs 
the costs. At this point, the platform ceases to pay for further data accumulation 
in order to balance profit and cost effectively, it stops purchasing new data.

In the early stages of platform development, the primary focus is on  database expansion. With only a small volume of data, the platform  cannot yet offer high‑quality products or attract many consumers. Therefore,  it invests heavily in data acquisition, even at a relatively high price, to  build a critical mass. As the platform matures and its database grows, it  becomes better positioned to leverage the accumulated data: it can provide  consumers with refined data products of higher quality, which in turn increases  revenue. This enhanced ability to generate profit from existing data reduces  the need for further accumulation. Consequently, the platform shifts its  strategy from aggressive acquisition to careful cost management, and the  optimal provider price $p^*(x)$ declines. Thus, the inverted‑U shape of $p^*(x)$ reflects a natural life‑cycle transition:  from a growth phase (where data are scarce and valuable) to a  maturity phase (where the database is already large and additional data yield diminishing returns). 

\begin{figure}[ht]
    \centering
    \includegraphics[width=0.6\textwidth]{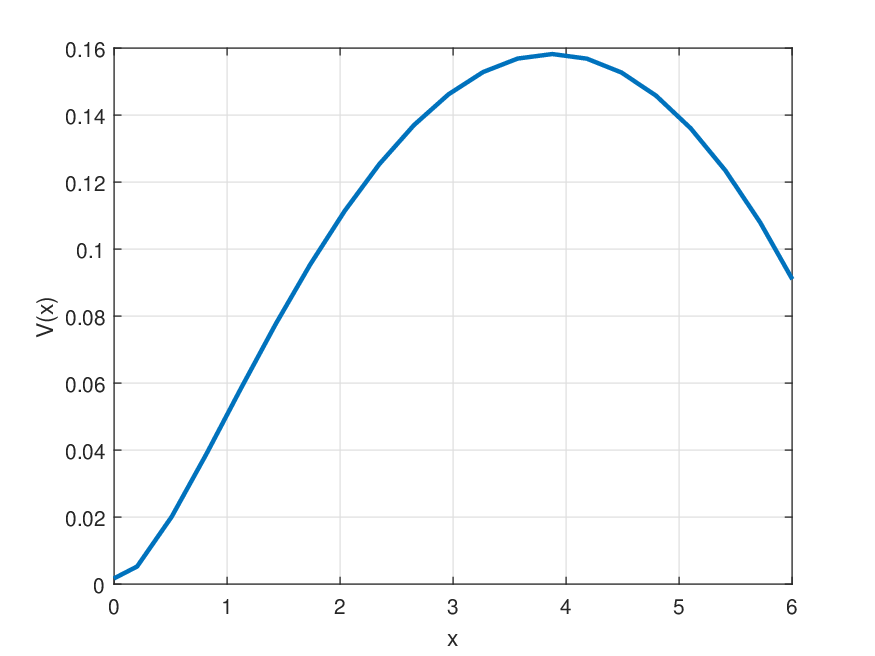}
    \caption{The value function $x\to V(x)$.}
    \label{fig:V}
\end{figure} 
As shown in Figure~\ref{fig:V}, the value function $ V(x) $ exhibits an inverted U-shape: it first increases from a low value, reaches a maximum, and then decreases. This indicates that there exists an optimal data volume level that maximizes the platform's expected profit. For small $ x $, the platform benefits from data accumulation because both the consumer price $ q^*(x) $ and provider price $ p^*(x) $ increase. However, beyond the optimal point, the convex storage cost $ \Phi(x) $ and the saturation of query quality $ g(x) $ dominate, making further data accumulation profitless.
The inverted U-shaped value function reveals a fundamental trade-off in data platform management, which aligns with the results presented in \cite{JW18}. In the initial phase, accumulating data improves both consumer-side revenue (via higher $q^*(x)$) and provider-side contribution (via positive $ p^*(x)$). Consequently, the platform's profit rises. Beyond the optimal stock, the convex storage cost and quality saturation outweigh the benefits: additional data no longer enhance query quality, and the marginal value of data becomes negative, driving $p^*(x)$ down to zero. Consequently, the platform's profit declines, suggesting that excessive data accumulation can be economically inefficient.

We now study the influence of the parameters on the pricing strategies.  We plot in Figure~\ref{fig:k1} the optimal prices and the value function when the parameter $k_1$ varies. We observe that increasing $k_1$ leads to a uniform upward shift in all three curves. The parameter $k_1$ can be interpreted as the sensitivity factor of data volume with respect to quality enhancement. As $k_1$ increases, the quality  $g(x)$ approaches its saturation level more rapidly. This quality improvement makes consumers more willing to pay, allowing the platform to charge a higher price, which is a direct effect of the quality premium. A higher sale price increases both the platform's instantaneous revenue and the expected incremental value of data $\Delta_V(x)$, thereby enhancing the platform's incentive to acquire more data. Consequently, the positive effect of $k_1$ on $q^*(x)$ propagates through the value function to $p^*(x)$ and ultimately to all key variables.
\begin{figure}[H]
    \centering
    \begin{subfigure}[b]{0.3\textwidth}
        \centering
        \includegraphics[width=\textwidth]{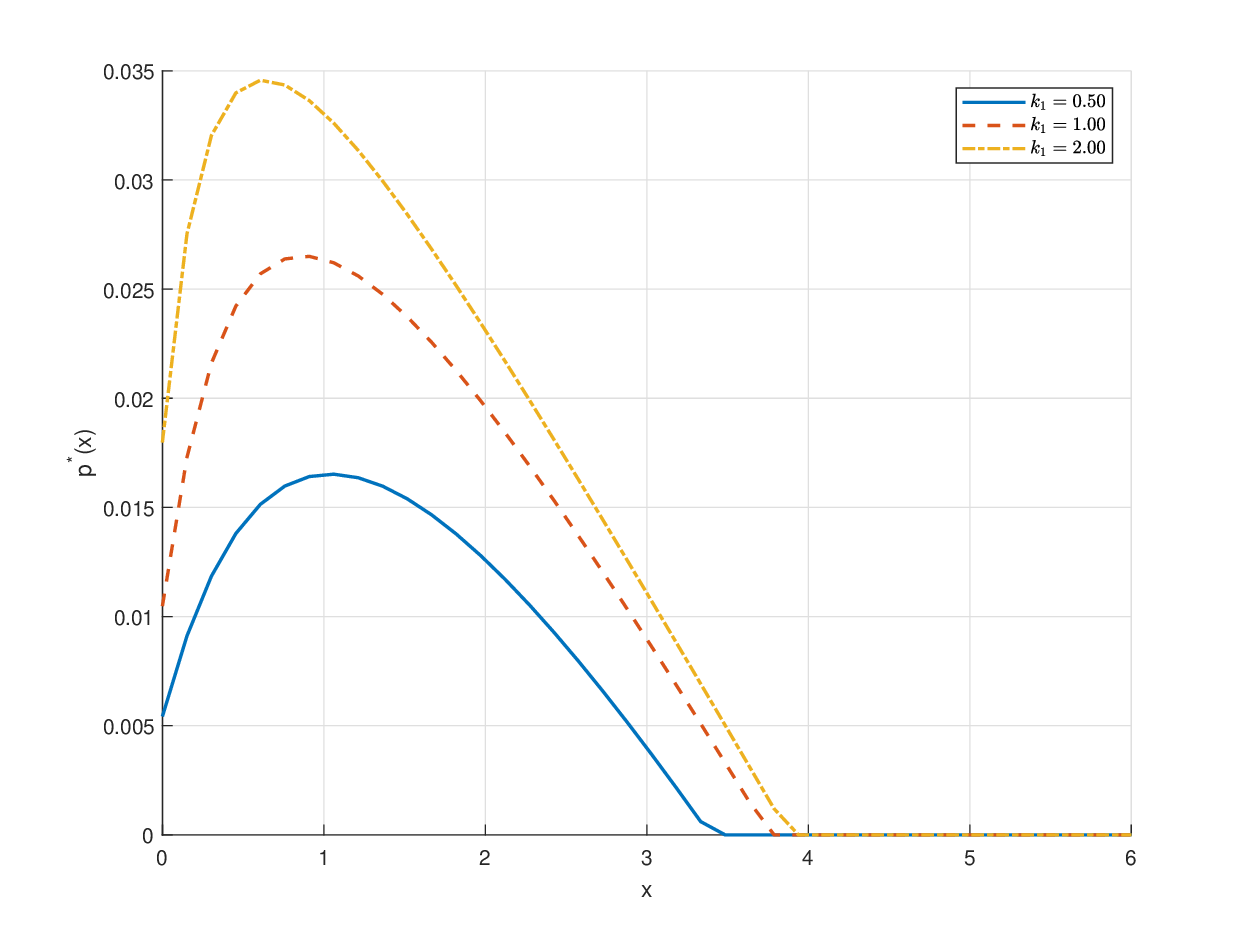}
    \end{subfigure}
    \begin{subfigure}[b]{0.3\textwidth}
        \centering
        \includegraphics[width=\textwidth]{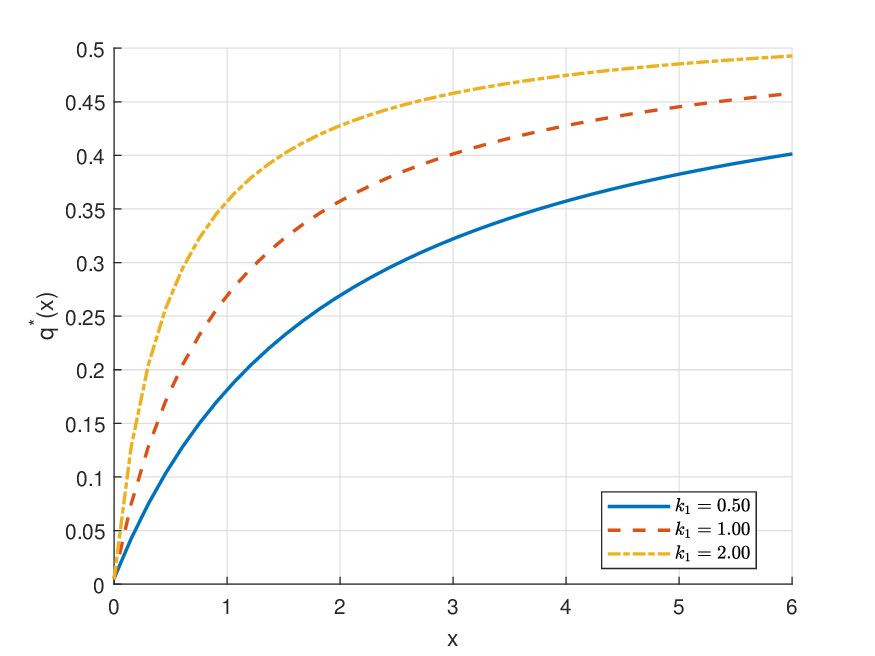}
    \end{subfigure}
    \begin{subfigure}[b]{0.3\textwidth}
        \centering
        \includegraphics[width=\textwidth]{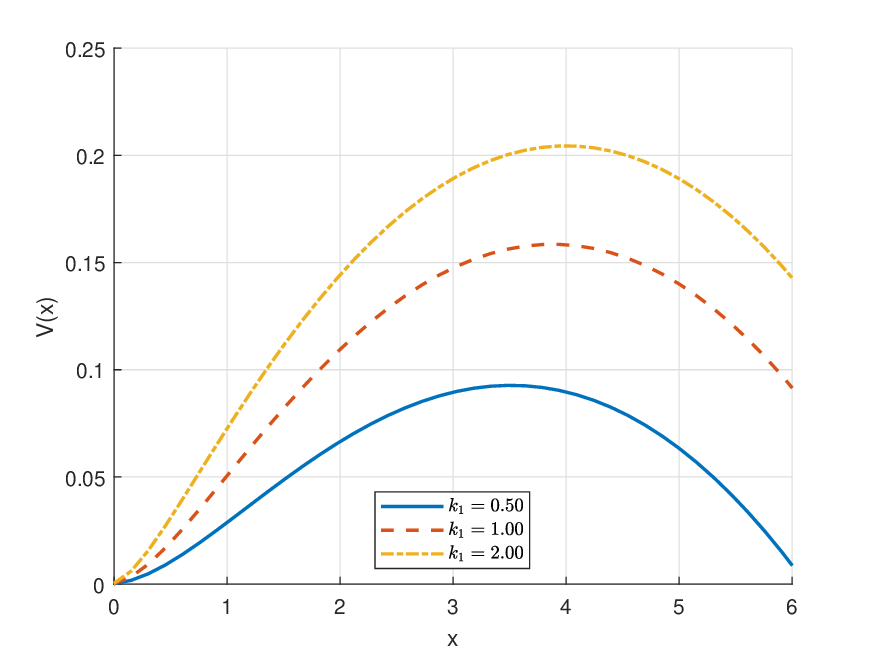}
    \end{subfigure}
    \caption{ Left panel: optimal provider price $x\to p^*(x)$. Middle panel: optimal consumer price $ x\to q^*(x)$. Right panel: value function $x\to V(x)$. The parameter values are $k_1=0.5, 1.0, 2.0$}
    \label{fig:k1}
\end{figure}

\begin{figure}[H]
    \centering
    \begin{subfigure}[b]{0.3\textwidth}
        \centering
        \includegraphics[width=\textwidth]{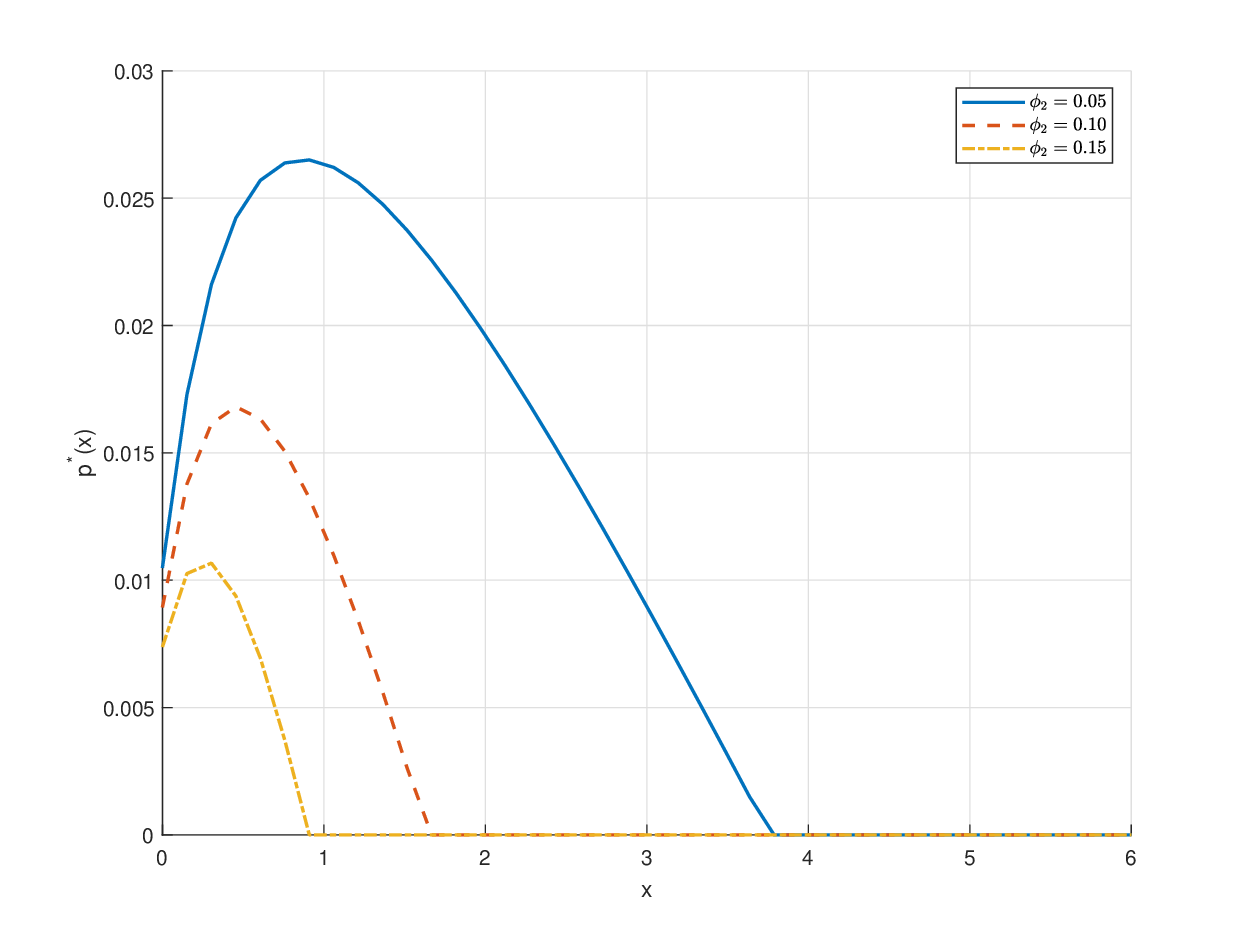}
    \end{subfigure}
    \begin{subfigure}[b]{0.3\textwidth}
        \centering
        \includegraphics[width=\textwidth]{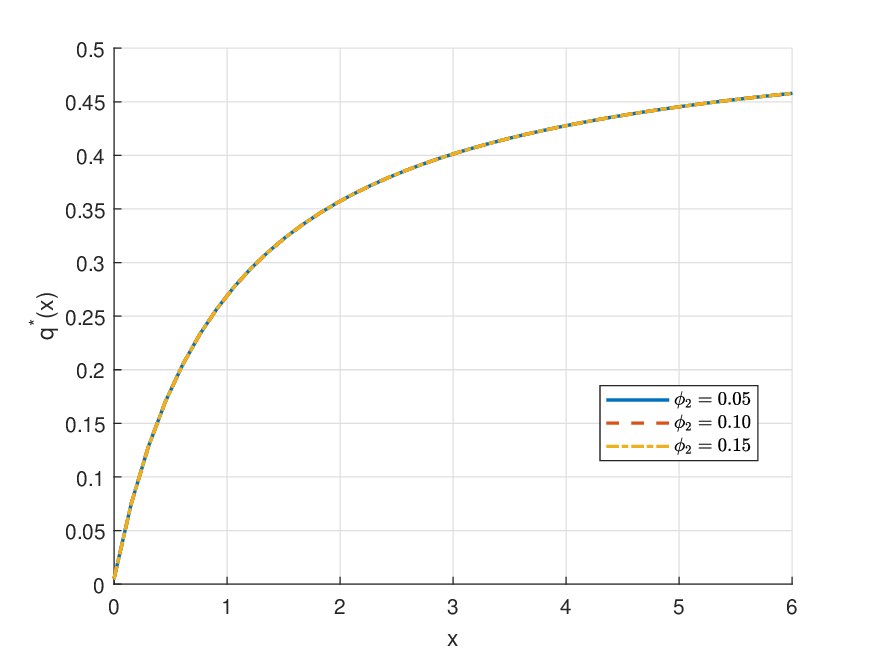}
    \end{subfigure}
    \begin{subfigure}[b]{0.3\textwidth}
        \centering
        \includegraphics[width=\textwidth]{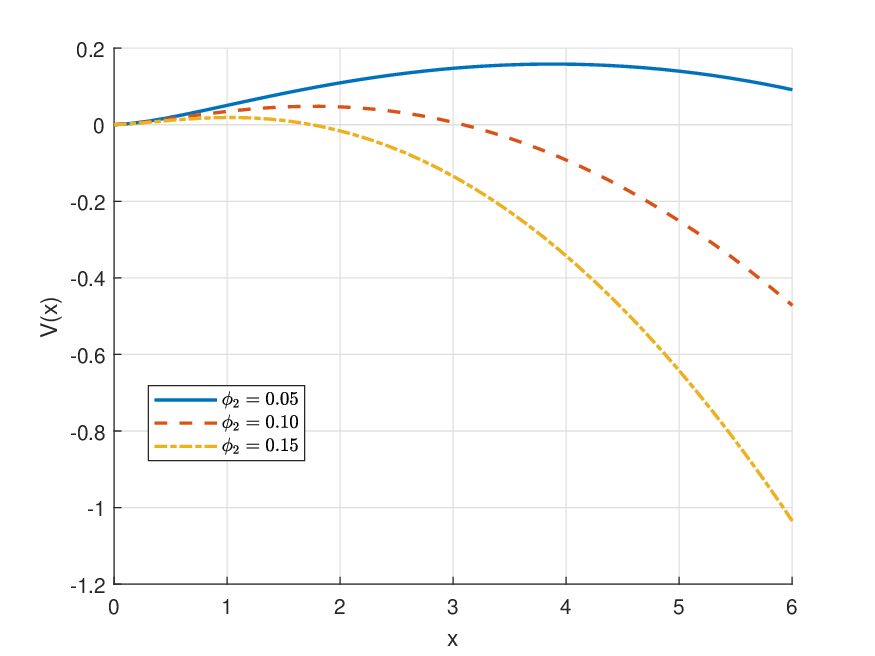}
    \end{subfigure}
    \caption{ Left panel: Optimal provider price $x\to p^*(x)$. Middle panel: optimal consumer price $x\to q^*(x)$. Right panel: value function $ x\to V(x)$. The parameter values are $\phi=0.05, 0.10, 0.15$}
    \label{fig:phi}
\end{figure}

We plot in Figure~\ref{fig:phi} the optimal prices and the value function when the parameter $\phi$ varies. $\phi$ stands for the cost related to the data processing capability of platform. A higher value of $\phi$ signifies that the platform incurs greater marginal costs in expanding its data collection efforts, potentially due to constraints in technological capability. Conversely, a lower $\phi$ reflects stronger technical capacity, enabling the platform to collect and process larger volumes of data at reduced marginal cost. Since $q^*(x)$ depends only on current product quality and not on future costs, it remains unaffected.  An increase in $\phi$ reduces the value function $V$ across all states, as the platform's profit is eroded by higher storage costs.  It then leads to a lower optimal provider price. Fewer data will be collected by the platform with a greater marginal investment cost. The induced cost surpasses the revenue generated from increased data volume, prompting platform to opt for reduce data acquisition to maximize its profit.In summary, it negatively affects both the platform's profitability and its willingness to accumulation for data, while leaving consumer pricing unchanged. 

\begin{figure}[H]
    \centering
    \begin{subfigure}[b]{0.3\textwidth}
        \centering
        \includegraphics[width=\textwidth]{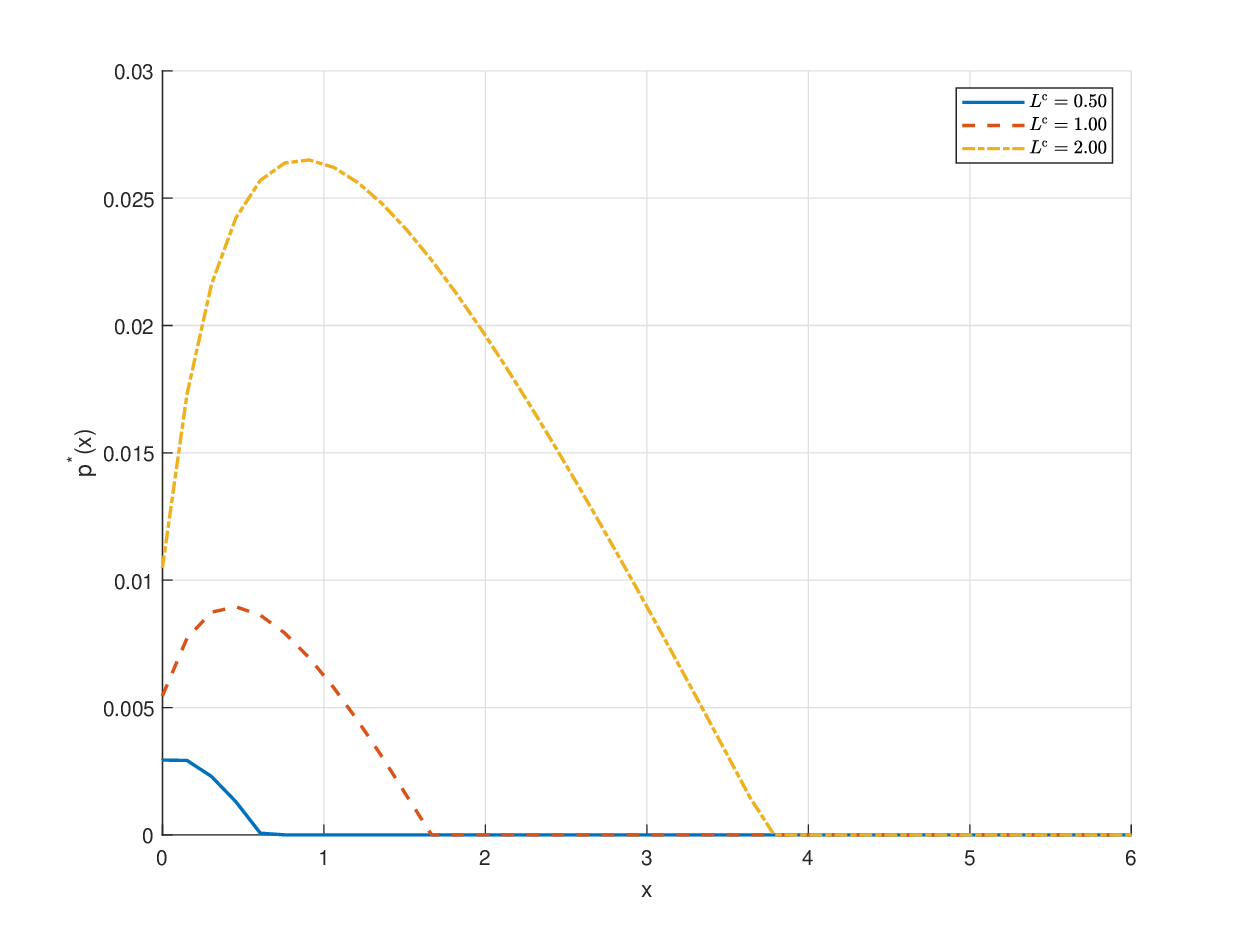}
    \end{subfigure}
    \begin{subfigure}[b]{0.3\textwidth}
        \centering
        \includegraphics[width=\textwidth]{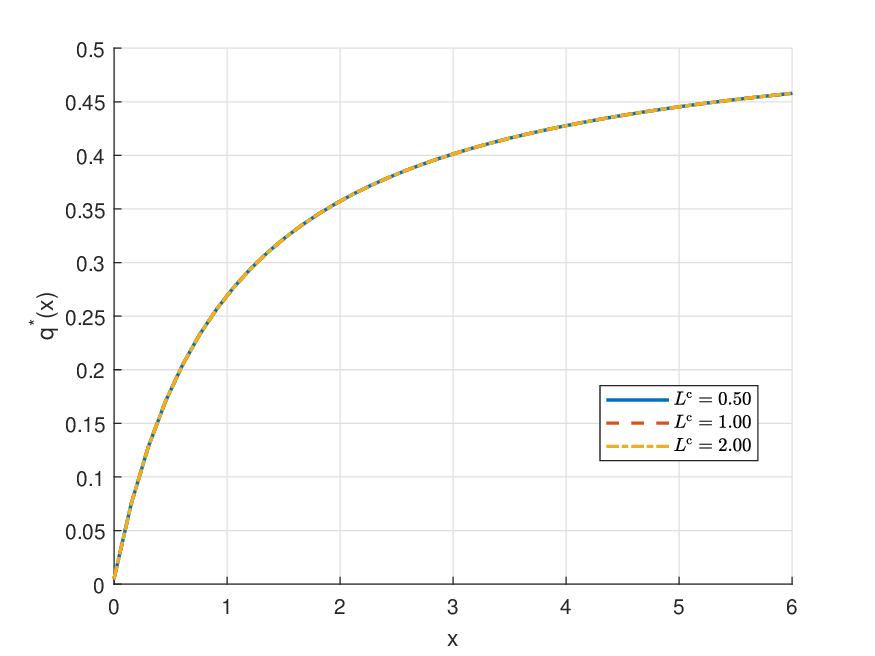}
    \end{subfigure}
    \begin{subfigure}[b]{0.3\textwidth}
        \centering
        \includegraphics[width=\textwidth]{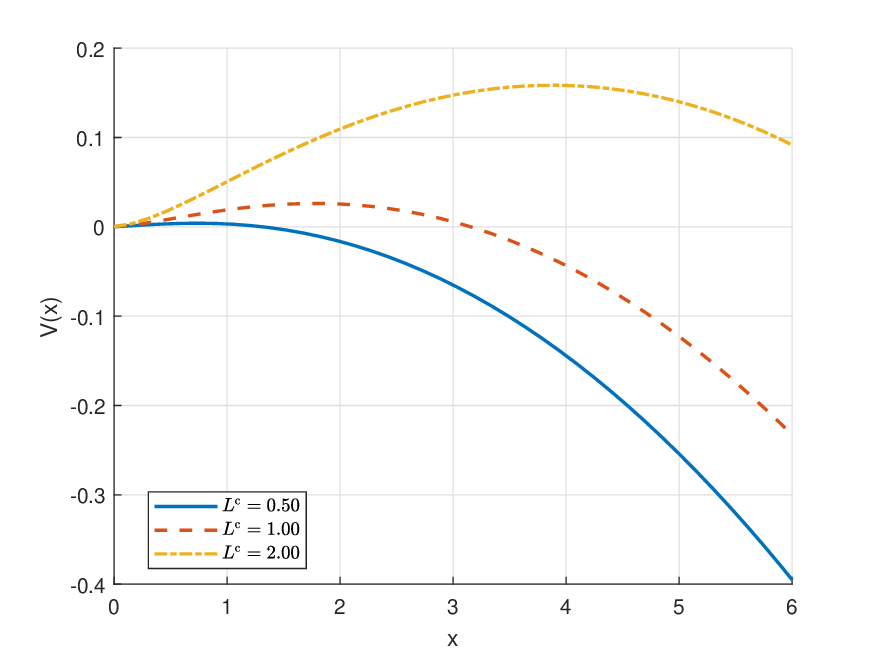}
    \end{subfigure}
    \caption{ Left panel:  optimal provider price $x\to p^*(x)$. Middle panel: optimal consumer price $x\to q^*(x)$. Right panel:  value function $ x\to V(x)$. The parameter values are $L^{\rm c}=0.5,1.0,2.0$}
    \label{fig:lambda2}
\end{figure}
In Figure~\ref{fig:lambda2}, we examine the impact of consumers' arrival excitement from the database on the pricing policies. When the platform neglects the excitement of the arrival process and considers Poisson process with constant intensity for customer arrivals, the optimal provider price $p^*(x)$ and $V(x)$ both exhibit a trend of gradual decrease.
That's because the demand from the consumer side is limited, and the marginal value of additional data diminishes as data volume increases. 
In contrast, with linear excitement, an increase in the database volume results in a proportional increase in consumer arrivals. The positive feedback between data accumulation and consumer demand creates an initial phase where data becomes more valuable, causing $V(x)$ and $p^*(x)$ to rise. Eventually, saturation of consumer arrivals and convex storage costs dominate, and $p^*(x)$ falls. This contrast highlights the role of demand‑side network effects in shaping platform data acquisition strategies.

Based on the above results, we suggest that the platform must regularly assess its own condition and the market situation to distinguish different stages. In a growth phase, engaging more customers by expanding the database scale and setting reasonable prices. Conversely, in a maturity phase, when the existing database is large enough, the platform can charge higher prices from consumers while negotiating a reduction in the provider price. By adopting these strategies, platform managers can skillfully harness the dynamic relationship between pricing and database volume, thereby achieving an optimal balance between short-term gains and long-term profitability. Moreover, the platform’s strategic priority should shift from merely acquiring additional data to leveraging data as a direct driver of consumer engagement.

\vspace{0.5cm}

\noindent{\bf Acknowledgement.} This work is supported by National Natural Science Foundation of China (No. 12471451), Natural Science Basic Research Program of Shaanxi (No. 2023-JC-JQ-05), Shaanxi Fundamental Science Research Project for Mathematics and Physics (No. 23JSZ010) and Fundamental Research Funds for the Central Universities (No. 20199235177).

\end{document}